\newcommand{\foa}{\,\forall\,}
\newcommand{\eps}{\varepsilon}
\newcommand{\emp}{\varnothing}
\newcommand{\R}{\mathbb{R}}
\newcommand{\Z}{\mathbb{Z}}
\newcommand{\N}{\mathbb{N}}
\newcommand{\C}{\mathbb{C}}
\renewcommand{\H}{\mathbb{H}}
\newcommand{\tri}[1]{\left<{}#1\right>}
\newcommand{\pfrac}[2][]{\frac{\partial {}#1}{\partial {}#2}}
\newcommand{\id}{\mathrm{id}}
\newcommand{\vol}{\mathrm{vol}}
\newcommand{\Ric}{\operatorname{Ric}}
\newcommand{\dom}{\operatorname{dom}}
\newcommand{\Spec}{\operatorname{Spec}}
\newcommand{\HH}{\mathcal{H}}
\newcommand{\EEE}{\mathscr{E}}
\theoremstyle{thmstyleone}
\newtheorem{thm}{Theorem}[section]
\newtheorem{prop}[thm]{Proposition}
\newtheorem{lemma}[thm]{Lemma}
\newtheorem{cor}[thm]{Corollary}
\theoremstyle{thmstylethree}
\newtheorem{defi}[thm]{Definition}
\theoremstyle{thmstyletwo}
\newtheorem{example}[thm]{Example}
\newtheorem{rem}[thm]{Remark}
\begin{document}

\author[1]{\fnm{Bobo} \sur{Hua}}
\email{bobohua@fudan.edu.cn}
\equalcont{These authors contributed equally to this work.}

\author[2]{\fnm{Florentin} \sur{M\"unch}}
\email{cfmuench@gmail.com}
\equalcont{These authors contributed equally to this work.}

\author*[3]{\fnm{Haohang} \sur{Zhang}}
\email{zhanghh22@mails.tsinghua.edu.cn}
\equalcont{These authors contributed equally to this work.}

\affil[1]{\orgdiv{School of Mathematical Sciences, LMNS and Shanghai Center for
		Mathematical Sciences}, \orgname{Fudan University}, \orgaddress{\city{Shanghai}, \postcode{200438}, \country{China}}}

\affil[2]{\orgdiv{Faculty of Mathematics and Computer Science}, \orgname{Leipzig University}, \orgaddress{\city{Leipzig}, \postcode{04109}, \country{Germany}}}

\affil*[3]{\orgdiv{Yau Mathematical Science Center}, \orgname{Tsinghua University}, \orgaddress{\city{Beijing}, \postcode{100084}, \country{China}}}

\title[Inequalities between Dirichlet and Neumann Eigenvalues on Surfaces]{Inequalities between Dirichlet and Neumann Eigenvalues on Surfaces}
\date{\today}

%\tableofcontents	

\abstract{
For a bounded Lipschitz domain $\Sigma$ in a Riemannian surface $M$ satisfying certain curvature condition, we prove that $$\mu_{3-\beta_1} \leq \lambda_{1},$$ where $\mu_k$ ($\lambda_k$ resp.) is the $k$-th Neumann (Dirichlet resp.) Laplacian eigenvalue on $\Sigma$ and $\beta_1$ is the first Betti number of $\Sigma.$ If $\Sigma$ is smooth and simply connected, we can further derive the strict inequality
$
\mu_{3}< \lambda_{1}.
$
This extends previous results on the Euclidean space to various curved surfaces, including the flat cylinder, the hyperbolic plane, hyperbolic cusp, collar, funnel, and minimal surfaces such as catenoid and helicoid. The novelty of the paper lies in comparing Dirichlet and Neumann Laplacian eigenvalues via the variational principle of the Hodge Laplacian on $1$-forms on a surface, extending the variational principle on vector fields in the Euclidean plane as developed by Rohleder \cite{R2023}. The comparison is reduced to the existence of a distance function with appropriate curvature conditions on its level sets.
}

\keywords{Dirichlet and Neumann eigenvalue, variational principle, hyperbolic geometry
}

\pacs[MSC Classification]{35J05,35P15}

\maketitle

\section{Introduction}
	Spectral theory of Laplace operators on Euclidean spaces or Riemannian manifolds is fundamental in partial different equations, geometry and physics, which was extensively studied in the literature, see e.g. \cite{CouHil53,ReedSimon1,ReedSimon4,Chavel84,SY94}. For eigenvalues of the Laplacian on a bounded domain, the boundary condition, Dirichlet or Neumann,  is motivated by the physical setting and is of significant importance in the studies. There are many results on such eigenvalue estimates, see e.g. \cite{Weyl,S1954,Weinberger1956,PPW,Pol61,LY83,Yang91,AshBen92,Ash99,ChengYang05,ChenCheng08,BucurHenrot19,XiaWang23}.

 	For a given domain, finding intrinsic relationship between Dirichlet eigenvalues and Neumann eigenvalues is an interesting topic. We recall some related results on the Euclidean spaces. For a bounded Lipschitz domain $\Sigma \subset \mathbb{R}^n,$ denote by $\mu_k$ ($\lambda_k$ resp.) the $k$-th  eigenvalue of the Laplacian on $\Sigma$ with Neumann (Dirichlet resp.) boundary condition, ordered by 
	$$
	0 = \mu_1 < \mu_2 \leq \mu_3 \leq \cdots,\quad (0 < \lambda_1 < \lambda_2 \leq \lambda_3 \leq \cdots\mathrm{resp.}).
	$$

	By the classical variational principle, one immediately has $\mu_{k} \leq \lambda_k$ for all $k \in \mathbb{Z}_+$. In 1952, P\'olya \cite{P1952} proved that $\mu_2 < \lambda_1$ for any bounded Lipschitz domain in $\mathbb{R}^2.$ Later, Payne  \cite{P1955} proved that $\mu_{k+2} < \lambda_k$ for all $k \in \mathbb{Z}_+$ for a bounded convex domain with $C^2$ boundary in $\mathbb{R}^2.$ In 1991, Friedlander \cite{F1991} established that $\mu_{k+1} \leq \lambda_k$ for all $k \in \mathbb{Z}_+$ for any bounded $C^1$ domain in $\mathbb{R}^n,$ which is now called Friedlander's inequality. For more results in this direction, one refers to \cite{A1986,S1954,M1991,LW1986, HW2001, F2005, FL2009, AM2012,C2019,Filonov2004,GM2009,S2008,AL1997,H2008}.
	
	Recently, Rohleder \cite{R2023} introduce an innovative method based on the variational principle for vector fields on the plane, combined with Helmholtz decomposition, to prove that $\mu_{k+2} \leq \lambda_k, k \in \mathbb{Z}_+$ for any simply-connected and bounded Lipschitz domain in $\mathbb{R}^2$. This removes the convexity condition for domains in Payne's  result \cite{P1955}, and extends Friedlander's inequality.
	
    In this paper, we aim to extend Rohleder's method to general Riemannian surfaces and prove the comparison result for Dirichlet and Neumann eigenvalues. We require the following two ingredients: On one hand, Rohleder introduced a linear operator on vector fields defined on the planar domain, whose spectrum consists of Dirichlet and Neumann Laplacian eigenvalues. However, its geometric meaning is not clear for a general surface. Our key observation is that the spectrum of the Hodge Laplacian on $1$-forms, with the appropriate boundary conditions, serves the same purpose on surfaces. Using this, we can apply the standard variational principle and the Hodge decomposition to compare the Dirichlet and Neumann eigenvalues.
    
    On the other hand, we need suitable test functions for the variational arguments, which are provided by the following curvature condition. A $C^1$ function $f$ on a Riemannian surface $M$ is called a \emph{distance function} if it has unit gradient, i.e.  $|\nabla f|\equiv 1.$
	
	\begin{defi}\label{def:curvature}
		We say a domain $\Sigma$ in a Riemannian surface $M$ satisfies the \textit{curvature condition} if there exist an open set $U\supset \Sigma$ and a distance function with $f\in C^3(U)$ such that the Gaussian curvature $K$ satisfies
		\begin{align}
			K\le -|\operatorname{Hess} f|^2\quad\text{in } U.\label{eq:curvatureCondition}
		\end{align}
		For convenience, we also say that $f$ satisfies the curvature condition in $U$.
		\end{defi}
	One easily sees that the necessary condition for the above is the non-possitivity of the Gaussian curvature. Moreover, the planar domain satisfies the curvature condition by choosing the coordinate function as the distance function. For a smooth distance function $f,$ the sufficient and necessary condition for
	\eqref{eq:curvatureCondition} is 
	\begin{align}
		K(x)\le -h^2(x)\quad \forall x\in U,\label{eq:meanCurvature}
	\end{align}	
	where $h(x)$ is the curvature of the level set $f^{-1}(f(x)).$ This justifies the term ``curvature condition'', and relax the requirement in \cite{M1991} demanding for a constant mean curvature of level sets. Later, we will provide several interesting examples that satisfy the curvature condition, including warped product surfaces with log-convex warped functions, the flat cylinder, the hyperbolic plane, hyperbolic cusp, funnel, and etc. The following is the main result of the paper.
	
	\begin{thm}\label{thm:main}
		Let $\Sigma$ be a bounded Lipschitz domain in a Riemannian surface $M$ satisfying the curvature condition. Then
		\begin{align}
			\mu_{3-\beta_1} \leq \lambda_1, \label{eq:mainIneq} 
		\end{align} where $\beta_1$ is the first Betti number of $\Sigma.$ If $\Sigma$ is moreover smooth and simply connected, further we have the strict inequality
		\begin{align}
			\mu_{3} < \lambda_1. \label{eq:mainIneqStrict} 
		\end{align}
	\end{thm}
\begin{rem} The result is meaningful only when $\beta_1=0$ or $1,$ since it is trivial for $\beta_1\ge 2.$ For the case of $\R^2,$ it recovers the first inequality of Rohleder's result \cite{R2023} for a simply-connected bounded Lipschitz domain. This theorem generalizes part of the result to the surface case satisfying the curvature condition.
\end{rem}	
	 
The proof strategy of the theorem is as follows: First, for a smooth bounded domain, we introduce the Hodge Laplacian on $1$-forms with appropriate boundary conditions, whose spectrum consists of Dirichlet and Neumann Laplacian eigenvalues (see Proposition~\ref{prop:specDec}). On a surface, the Hodge Laplacian on $1$-forms decomposes into the upward and downward Laplacians. The spectrum of the upward (downward resp.) Laplacian coincides with that of the Hodge Laplacian on
$2$-forms ($0$-forms resp.), up to some null eigenvalues, and corresponds to the Dirichlet (Neumann resp.) Laplacian eigenvalues. This is a crucial property in dimension two, which is difficult to generalize to higher dimensions.
Second, we utilize Dirichlet eigenfunctions to construct specific $1$-forms, adopt suitable test functions provided by the curvature condition, and apply the variational principle for the Hodge Laplacian of $1$-forms to compare the Dirichlet and Neumann eigenvalues. Finally, we approximate the Lipschitz domain from the exterior by a sequence of smooth domains with a uniform cone condition. We prove the lower semi-continuity of the Neumann eigenvalues for the approximating sequence and extend the result to Lipschitz domains (see Appendix).

Theorem~\ref{thm:main} reduces this problem to finding an appropriate distance function on the surface that satisfies the curvature condition in Definition~\ref{def:curvature}. For applications, we discuss the curvature condition as follows. The first candidate is the Busemann function on a Hadamard surface, which is a simply-connected surface with nonpositive curvature. This function is a natural distance function to the infinity. For a space form of nonpositive curvature, such as the plane or the hyperbolic plane, the Busemann function satisfies the curvature condition; see Example~\ref{eg:spaceForm} (or \cite{CS2009}) for the hyperbolic case. In a general Hadamard surface, it remains an interesting question whether there exists a Busemann function that satisfies the curvature condition. See, e.g., \cite{HH1977, ISS2014, IKPS2017} for discussions of Busemann functions.
	
	The next theorem provides a criterion for the curvature condition in a twisted product surface.
	
	\begin{thm}\label{thm:logConvex}
		Let $M=(I\times S^1,g),$ for an interval $I\subset \R,$ be a twisted product surface with the metric
		$$
		g(r,\theta)=dr^2+\phi(r,\theta)^2 d\theta^2,\quad (r,\theta)\in M,$$ where $\phi$ is a positive smooth function.
		Then the radial distance function $f(r,\theta)=r$ satisfies the curvature condition if and only if $\phi(r,\theta)$ is log-convex in $r$, that is, $\partial^2_r\log\phi\ge 0$. In this case, every bounded Lipschitz (smooth resp.) domain $\Sigma\subset M$ satisfies the eigenvalue inequality \eqref{eq:mainIneq} (strict inequality \eqref{eq:mainIneqStrict} resp.).
	\end{thm}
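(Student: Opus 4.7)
The plan is a direct verification. Since $f(r,\theta) = r$ depends only on the radial coordinate and the metric takes the form $g = dr^2 + \phi^2 d\theta^2$, one immediately has $\nabla f = \partial_r$ with $|\nabla f|_g \equiv 1$, so $f$ is a smooth distance function on all of $M$. The equivalence then reduces to computing the two sides of $K \le -|\operatorname{Hess} f|^2$ explicitly in terms of $\phi$, and checking that the inequality is, pointwise, nothing more than log-convexity of $\phi$ in $r$.

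For the Hessian, I would use $\operatorname{Hess} f(X,Y) = -(\nabla_X Y)f$, which is legitimate because $\partial_\theta f \equiv 0$ and $\partial_r f \equiv 1$, together with the Christoffel symbols of $g$ in the coordinates $(r,\theta)$. A short calculation gives $\Gamma^r_{rr} = \Gamma^r_{r\theta} = 0$ and $\Gamma^r_{\theta\theta} = -\phi\phi_r$. Passing to the orthonormal frame $\{\partial_r,\phi^{-1}\partial_\theta\}$, the Hessian of $f$ is diagonal with entries $0$ and $\partial_r\log\phi$, so
$$|\operatorname{Hess} f|^2 = (\partial_r \log \phi)^2.$$

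For the Gaussian curvature I would apply the Brioschi formula for an orthogonal two-dimensional metric with $E = 1$ and $G = \phi^2$. Since $E$ is independent of $\theta$, the $\theta$-derivatives drop out and one obtains $K = -\phi_{rr}/\phi$, irrespective of how $\phi$ depends on $\theta$. Substituting both quantities, the curvature condition reads
$$-\frac{\phi_{rr}}{\phi} \le -\frac{\phi_r^2}{\phi^2} \;\Longleftrightarrow\; \phi\phi_{rr} - \phi_r^2 \ge 0 \;\Longleftrightarrow\; \partial_r^2 \log \phi \ge 0,$$
and since $\phi>0$ each step is a pointwise equivalence, establishing both directions of the stated criterion on $M$.

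With the criterion in hand, the eigenvalue inequality is immediate. For any bounded Lipschitz domain $\Sigma \subset M$ one selects an open neighbourhood $U \supset \cl\Sigma$ contained in $M$; log-convexity on $M$ implies $f \in C^3(U)$ satisfies the curvature condition in $U$, and Theorem~\ref{thm:main} then delivers \eqref{eq:mainIneq}. No genuine obstacle is anticipated; the only care required is to normalize correctly when moving from the coordinate basis to the orthonormal frame in the Hessian calculation, and to note that the $\theta$-dependence of $\phi$ does not actually contaminate either $K$ or $|\operatorname{Hess} f|^2$, which is the geometric reason the criterion involves only $r$-derivatives.
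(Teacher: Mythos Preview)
Your proposal is correct and matches the paper's own proof essentially step for step: both verify $|\nabla f|\equiv 1$, compute $K=-\phi_{rr}/\phi$ and $|\operatorname{Hess} f|^2=(\partial_r\log\phi)^2$, reduce the curvature condition to $\partial_r^2\log\phi\ge 0$, and then invoke Theorem~\ref{thm:main}. The only difference is cosmetic---the paper cites a reference for the curvature and Hessian formulas whereas you write out the Christoffel symbols and Brioschi formula explicitly---but the argument is the same.
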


	\begin{rem} When $\phi = \phi(r)$ does not depend on $\theta$, the twisted product structure simplifies to the well-known warped product. There are plenty of examples of warped product surfaces with log-convex warped functions. For example, one chooses $\phi=e^f$ for any convex function $f$ on $I.$ Typical examples are as follows:
		\begin{enumerate}
		\item Flat cylinder on $\R\times S^1$ with $\phi=\mathrm{const.};$
		\item Hyperbolic cusp on $\R\times S^1$ with $\phi(r)=e^r,$ see Example \ref{eg:cusp}.
		\item Hyperbolic collar (funnel resp.) on $\R\times S^1$ ($[0,+\infty)\times S^1$ resp.) with $\phi(r)=\cosh r,$ see Example \ref{eg:collar} for collars. In fact, $\phi$ satisfies the log-convex condition strictly, thus we allow a compact supported perturbation of the metric, i.e. 
		$$
		\phi=\cosh r +\eps h(r,\theta),\quad h\ge 0,\quad h\in C_c^\infty(\R\times S^1),
		$$
		where $\eps>0$ is sufficiently small and depends on the domain $\Sigma$.
		\end{enumerate}
	\end{rem}
	The fundamental structures in hyperbolic geometry consist of cusps, funnels, and collars. In a complete hyperbolic surface, the ends are either cusps or funnels, and there exists a neighborhood of simple closed geodesics forming a collar \cite{BMM2018,B1992}.
All of them satisfy the curvature conditions, and hence our results apply to hyperbolic geometry. We collect them in the following corollary.
	
	\begin{cor}\label{cor:main}
	Let $M$ be a space form with a nonpositive curvature, flat cylinder, hyperbolic cusp, funnel, or collar. For any bounded Lipschitz domain $\Sigma \subset M,$ the inequality \eqref{eq:mainIneq} holds.
	\end{cor}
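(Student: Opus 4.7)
The plan is to reduce Corollary~\ref{cor:main} to Theorem~\ref{thm:main} by verifying, case by case, that the ambient surface admits a distance function satisfying the curvature condition of Definition~\ref{def:curvature}. I would split the discussion into the twisted product surfaces, which are handled directly by Theorem~\ref{thm:logConvex}, and the space form of nonpositive curvature, for which an explicit choice of distance function is needed.

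For the flat cylinder, hyperbolic cusp, funnel, and collar, I would apply Theorem~\ref{thm:logConvex} with the radial function $f(r,\theta)=r$. These are all warped products $(I\times S^1,\ dr^2+\phi(r)^2 d\theta^2)$, with $\phi$ equal to a constant (flat cylinder), $e^r$ (cusp), or $\cosh r$ (funnel and collar). The log-convexity condition $\partial_r^2 \log \phi \ge 0$ is immediate: the first two cases give $\partial_r^2 \log \phi = 0$, while a short computation gives $\partial_r^2 \log \cosh r = 1/\cosh^2 r > 0$. Hence Theorem~\ref{thm:logConvex} yields the inequality \eqref{eq:mainIneq} for any bounded Lipschitz domain in these surfaces.

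For a space form of nonpositive curvature, the Euclidean case $\R^2$ is trivial: take $f$ to be a linear coordinate function, so $\operatorname{Hess} f \equiv 0$ and $K \equiv 0$ saturate \eqref{eq:curvatureCondition}. For the hyperbolic plane $\H^2$ of curvature $-1$, I would take $f$ to be a Busemann function associated with a point at infinity; concretely, in the upper half-plane model $\{(x,y):y>0\}$ with metric $y^{-2}(dx^2+dy^2)$, the function $f(x,y)=-\log y$ is smooth, and a direct computation via the Christoffel symbols gives $|\nabla f|\equiv 1$, $(\operatorname{Hess} f)_{xx}=1/y^2$ with all other components of the Hessian vanishing, so $|\operatorname{Hess} f|^2 \equiv 1 = -K$. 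Thus the curvature condition is saturated on all of $\H^2$, and Theorem~\ref{thm:main} applies; this verification is essentially the content of Example~\ref{eg:spaceForm}.

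With the curvature condition verified in every case, Corollary~\ref{cor:main} follows immediately from Theorem~\ref{thm:main}, since each of the above distance functions is smooth on the whole surface and hence on any open neighborhood of $\cl\Sigma$. The one point worth flagging, more than a real obstacle, is that on $\H^2$ one cannot proceed via geodesic polar coordinates: these yield $\phi(r)=\sinh r$, whose logarithm satisfies $\partial_r^2 \log \sinh r = -1/\sinh^2 r < 0$, so Theorem~\ref{thm:logConvex} fails. The correct distance function is the Busemann function, whose level sets are horocycles of geodesic curvature $1$ exactly matching the ambient curvature $-1$; this is the structural reason why the curvature condition is saturated in $\H^2$.
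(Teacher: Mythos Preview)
Your proposal is correct and follows essentially the same route as the paper: the cylinder, cusp, funnel, and collar are handled via Theorem~\ref{thm:logConvex} by checking log-convexity of $\phi$ (constant, $e^r$, $\cosh r$), while $\R^2$ and $\H^2$ are treated by exhibiting an explicit distance function (a linear coordinate, respectively the Busemann function $-\log y$ in the upper half-plane model) and verifying \eqref{eq:curvatureCondition} directly, exactly as in Example~\ref{eg:spaceForm}. Your closing remark that geodesic polar coordinates on $\H^2$ fail the log-convexity test is a nice additional observation not spelled out in the paper.
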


	\begin{figure}[htbp]
		\includegraphics[width=0.45\textwidth]{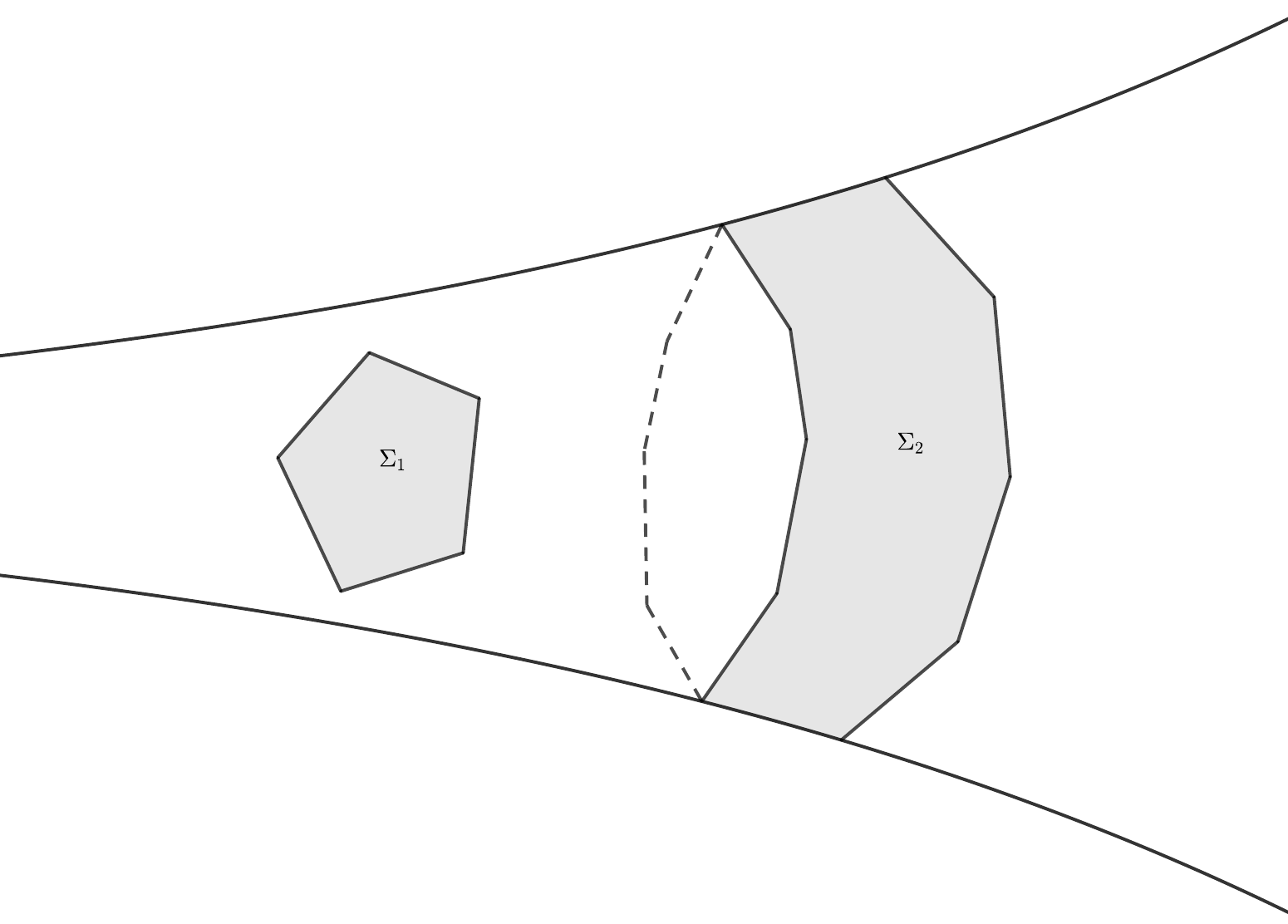}\quad 
		\includegraphics[width=0.45\textwidth]{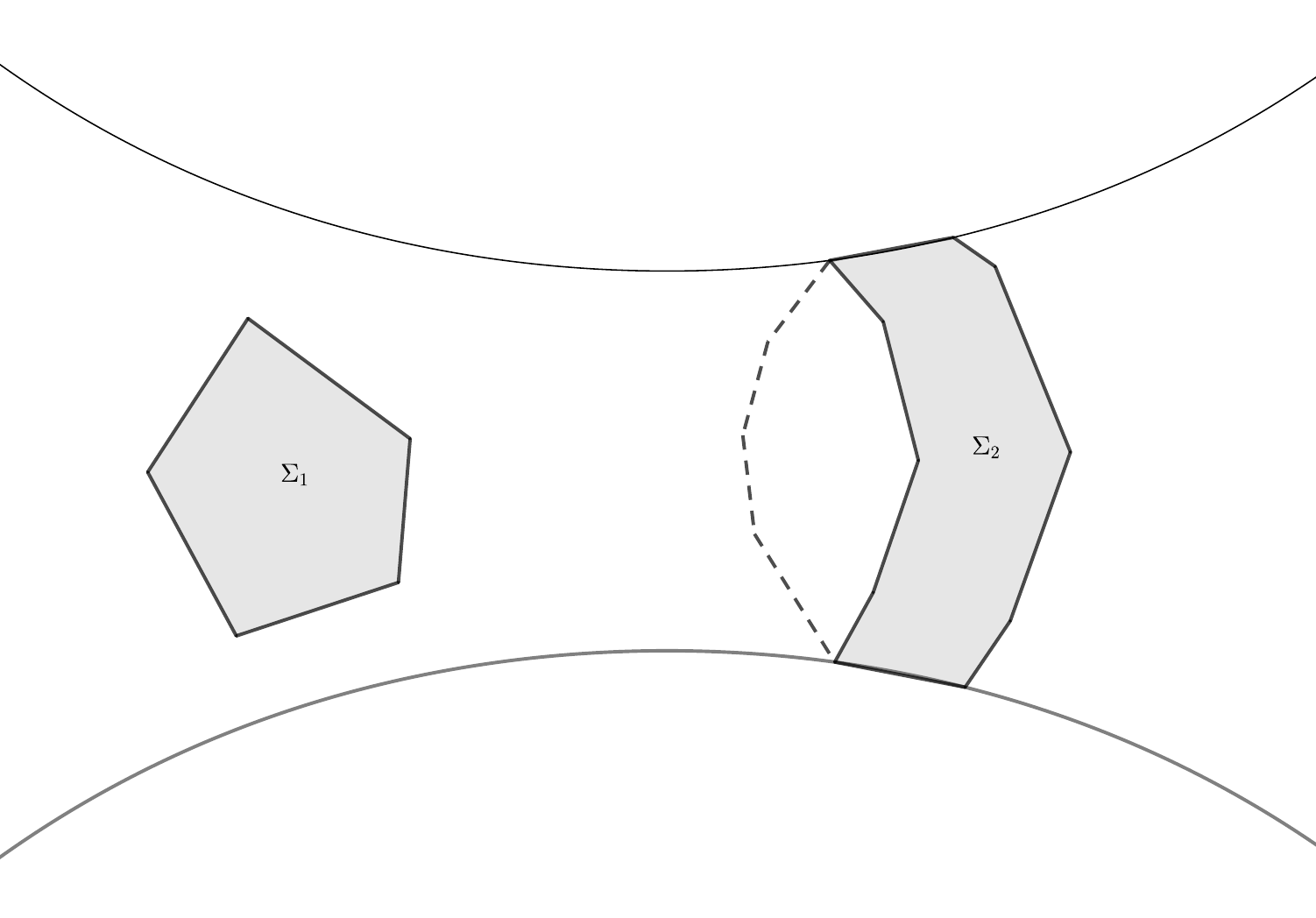}
		\caption{Bounded Lipschitz domains in a cusp and a collar. As $\Sigma_1$ is simply-connected, it follows that $\mu_{3}\le \lambda_1$. As for the annular domain $\Sigma_2,$ we obtain that $\mu_{2}\le \lambda_1.$}
	\end{figure}
	
	\begin{rem} Note that Mazzeo \cite{M1991} proved Friedlander's inequality $\mu_{k+1}\le \lambda_k$ on the hyperbolic plane, and our result improves his estimate for $k=1$ from $\mu_{2}$ to $\mu_{3}$ for simply-connected bounded domains. The cases of cylinders, cusps, funnels and collars are noteworthy and have not been investigated previously. Moreover, eigenvalue estimates for cusps, funnels and collars are novel contributions, since they are fundamental structures in hyperbolic surfaces.
	\end{rem}
	
% TODO: New minimal surfaces?
	
	It is also interesting to observe that the eigenvalue inequalities~\eqref{eq:mainIneq} and \eqref{eq:mainIneqStrict} hold locally on certain minimal surfaces in $\mathbb{R}^3$, including the catenoid and helicoid, which are among the most common and useful minimal surfaces \cite{CM2011}. This part is shown in Examples~\ref{eg:helicoid} and \ref{eg:catenoid}.
	
	\begin{figure}[htbp]
		\includegraphics[width=0.45\textwidth]{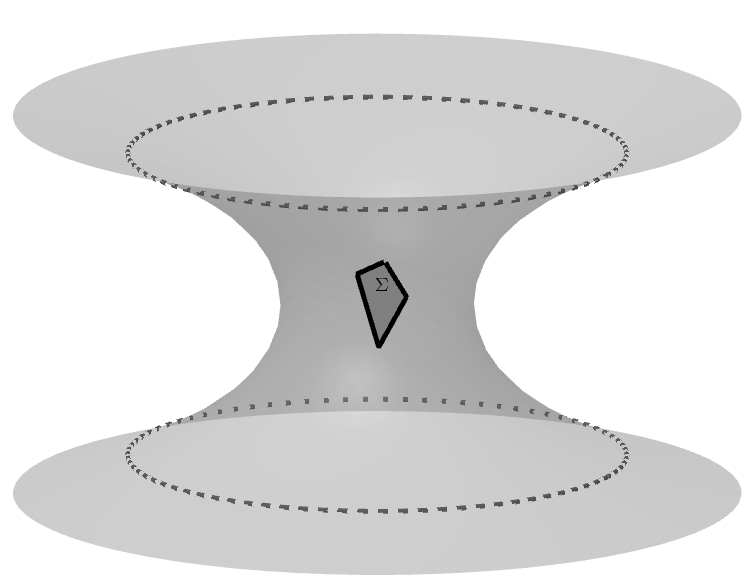}\quad 
		\includegraphics[width=0.45\textwidth]{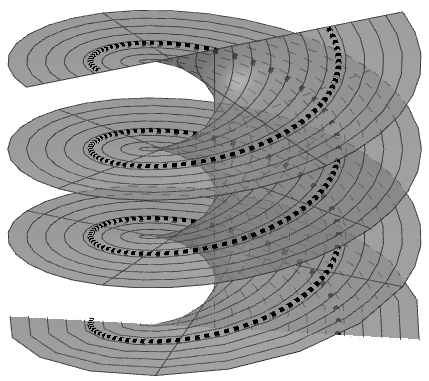}
		\caption{A catenoid and a helicoid. A domain $\Sigma$ compactly contained in the region bounded by the dotted lines satisfies the curvature condition.}
	\end{figure}
	
	The paper is organized as follows: we introduce basic concepts of differential forms and Hadamard manifolds in Section~\ref{sec:Pre}. Section~\ref{sec:SobolevForm} contains the theory of Sobolev spaces for differential forms based on \cite{S1995}. Section~\ref{sec:surface} is devoted to necessary analytical tools: spectral decomposition for the Hodge Laplacian, Dirichlet form, and variational principle. In Section~\ref{sec:Friedlander}, we prove the main theorems and provide several examples in detail. The smooth approximation of a Lipschitz domain and the semi-continuity of Neumann eigenvalues is included in the Appendix.

\section{Preliminaries}\label{sec:Pre}

\subsection{Notations for Differential Forms}
	Let $(M^n,g)$ be an oriented $n$-dimensional Riemannian manifold. We consider the exterior $k$-form bundle $\Lambda^k(M)$ and its smooth sections $\Omega^k(M) = \Gamma(\Lambda^k(M))$, which constitute the space of differential forms of degree $k$ on $M$, where $k \in \mathbb{N}$.
	
	Using the musical isomorphism $\sharp: T^*_pM \rightarrow T_pM$ defined by
	$$
	g(\omega^{\sharp}, v) = \omega(v), \quad \forall v \in T_pM,
	$$
	we can define the pointwise inner product on $\Lambda^1(M)$ as
	$$
	\langle \omega, \nu \rangle = g(\omega^\sharp, \nu^\sharp),
	$$
	which extends to $\Lambda^k(M)$ by
	$$
	\langle \omega_1 \wedge \cdots \wedge \omega_k, \nu_1 \wedge \cdots \wedge \nu_k \rangle = \det\{\langle \omega_i, \nu_j \rangle \}_{i,j=1}^k.
	$$
	The inverse map of $\sharp$ is written as $\flat: T_pM \rightarrow T^*_pM$, and both maps naturally extend to isomorphisms between $TM$ and $T^*M$.
	
	We denote the volume form by $\operatorname{vol}_M \in \Omega^n(M)$, which satisfies
	$$
	\operatorname{vol}_M(X_1, \ldots, X_n) = \sqrt{\det\{g(X_i, X_j)\}_{i,j=1}^n}.
	$$
	The Hodge operator $*: \Omega^k(M) \rightarrow \Omega^{n-k}(M)$ is defined as
	$$
	\omega \wedge (*\nu) = \langle \omega, \nu \rangle \operatorname{vol}_M, \quad \forall \omega \in \Omega^k(M).
	$$
	
	\begin{prop}[\cite{L2002}]
		The Hodge operator satisfies the following properties for all $\omega, \nu \in \Omega^k(M)$:
		\begin{enumerate}[(i)]
			\item $*(*\omega) = (-1)^{k(n-k)} \omega,$
			\item $\langle *\omega, *\nu \rangle = \langle \omega, \nu \rangle.$
		\end{enumerate}
	\end{prop}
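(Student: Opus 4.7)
The plan is to reduce both identities to a pointwise computation in a local orthonormal coframe. Fix $p \in M$ and choose an orthonormal basis $\{e^1,\ldots,e^n\}$ of $T_p^*M$ such that $e^1\wedge\cdots\wedge e^n$ agrees with the volume form at $p$. For an ordered multi-index $I=(i_1<\cdots<i_k)$ write $e^I:=e^{i_1}\wedge\cdots\wedge e^{i_k}$; the set $\{e^I:|I|=k\}$ forms an orthonormal basis of $\Lambda^k T_p^*M$. Unwinding the defining relation $\omega\wedge(*\nu)=\langle\omega,\nu\rangle\operatorname{vol}_M$ with $\omega=\nu=e^I$ shows that $*e^I=\epsilon_{I,J}\,e^J$, where $J=I^c$ is the complementary multi-index with its natural ordering and $\epsilon_{I,J}\in\{\pm1\}$ is the sign of the permutation $(i_1,\ldots,i_k,j_1,\ldots,j_{n-k})$ of $(1,\ldots,n)$.

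For part (i), applying the Hodge star once more yields $*(*e^I)=\epsilon_{I,J}\,\epsilon_{J,I}\,e^I$. The two signs differ by a block swap of the length-$k$ and length-$(n-k)$ strings of indices, which requires exactly $k(n-k)$ transpositions; hence $\epsilon_{J,I}=(-1)^{k(n-k)}\epsilon_{I,J}$, and since $\epsilon_{I,J}^2=1$ we get $*(*e^I)=(-1)^{k(n-k)}e^I$. Extending by linearity proves (i).

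For part (ii), I would deduce the identity from (i) via the graded commutativity of $\wedge$. Since $*\nu$ has degree $n-k$, one computes
\begin{align*}
\langle *\omega,*\nu\rangle \operatorname{vol}_M
&= (*\omega)\wedge *(*\nu) = (-1)^{k(n-k)}(*\omega)\wedge\nu \\
&= (-1)^{k(n-k)}(-1)^{k(n-k)}\,\nu\wedge(*\omega) = \langle\nu,\omega\rangle\operatorname{vol}_M = \langle\omega,\nu\rangle\operatorname{vol}_M,
\end{align*}
using (i) in the second step and commuting forms of complementary degrees $n-k$ and $k$ in the third. The only delicate point in the whole argument is the sign bookkeeping leading to $\epsilon_{J,I}=(-1)^{k(n-k)}\epsilon_{I,J}$; once that is verified, the rest of the proof is purely formal.
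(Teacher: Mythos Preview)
Your proof is correct. The paper does not supply its own proof of this proposition; it merely records the statement with a citation to \cite{L2002} and uses it as a standard fact, so there is nothing to compare against. Your argument via a local orthonormal coframe and the sign identity $\epsilon_{J,I}=(-1)^{k(n-k)}\epsilon_{I,J}$ is the textbook one, and the derivation of (ii) from (i) using graded commutativity is clean and accurate.
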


	Suppose $\Sigma \subset M$ is a compact $n$-dimensional submanifold with boundary, equipped with an induced atlas, also known as the $\partial$-manifold in \cite{S1995}. Denote by $\Omega^*(\Sigma)$ the set of differential forms on $\Sigma$. Let $\iota: \Sigma \rightarrow M$ be the inclusion map. Then we have $\iota^* \Omega^*(M) \subset \Omega^*(\Sigma)$.
	
	The $L^2$ inner product on $\Omega^k(\Sigma)$ is defined as
	$$
	(\omega_1, \omega_2) := \int_\Sigma \langle \omega_1, \omega_2 \rangle \operatorname{vol}_M = \int_\Sigma \omega_1 \wedge (*\omega_2), \quad \omega_1, \omega_2 \in \Omega^k(\Sigma).
	$$ 
	
	The co-differential operator is given by $d^* \omega := (-1)^{nk+n+1} *d(*\omega)$ for $\omega \in \Omega^k(\Sigma)$. By Stokes' theorem, we have the following lemma.
	
	\begin{lemma}[Green's Formula]\label{lem:Green}
		Let $i: \partial \Sigma \rightarrow \Sigma$ be the inclusion map. Then, for $\omega_1 \in \Omega^{k-1}(\Sigma)$ and $\omega_2 \in \Omega^k(\Sigma)$, we have
		\begin{align}
			(d \omega_1, \omega_2) = (\omega_1, d^* \omega_2) + \int_{\partial \Sigma} i^*(\omega_1 \wedge *\omega_2). \label{eq:Green}
		\end{align}
	\end{lemma}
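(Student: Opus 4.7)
The plan is to deduce the formula from Stokes' theorem applied to the $(n-1)$-form $\omega_1\wedge *\omega_2\in\Omega^{n-1}(\Sigma)$. The graded Leibniz rule gives
\[
d(\omega_1\wedge *\omega_2) \;=\; d\omega_1\wedge *\omega_2 \;+\; (-1)^{k-1}\,\omega_1\wedge d(*\omega_2),
\]
and by the defining property of the Hodge star the first summand equals $\langle d\omega_1,\omega_2\rangle\operatorname{vol}_M$. The remaining task is therefore to rewrite $(-1)^{k-1}\,\omega_1\wedge d(*\omega_2)$ as $-\langle\omega_1,d^*\omega_2\rangle\operatorname{vol}_M$.

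For this step, I would invert the defining relation $d^*\omega_2=(-1)^{nk+n+1}*d(*\omega_2)$ by applying $*$ to both sides and using $**\alpha=(-1)^{j(n-j)}\alpha$ on a $j$-form $\alpha$, here with $j=n-k+1$. Reducing the exponent $nk+n+1+(n-k+1)(k-1)$ modulo $2$ yields $k$, so $*(d^*\omega_2)=(-1)^k\,d(*\omega_2)$, and consequently
\[
\omega_1\wedge d(*\omega_2) \;=\; (-1)^k\,\omega_1\wedge *(d^*\omega_2) \;=\; (-1)^k\langle\omega_1,d^*\omega_2\rangle\operatorname{vol}_M.
\]
Combining this with the factor $(-1)^{k-1}$ from the Leibniz rule turns the mixed term into $-\langle\omega_1,d^*\omega_2\rangle\operatorname{vol}_M$, exactly as required.

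It then suffices to integrate the resulting identity over $\Sigma$ and apply Stokes' theorem, which is valid on the compact $\partial$-manifold $\Sigma$ in the sense of \cite{S1995}; the left-hand side produces the boundary contribution $\int_{\partial\Sigma} i^*(\omega_1\wedge *\omega_2)$, and rearrangement delivers \eqref{eq:Green}. The only real obstacle in this plan is sign bookkeeping in the exponents $nk+n+1$, $k-1$, and $(n-k+1)(k-1)$; the entire analytic content is supplied by Stokes' theorem, and the only smoothness hypothesis used is that $\omega_1$ and $\omega_2$ are smooth up to the boundary, which is already built into $\Omega^{k-1}(\Sigma)$ and $\Omega^{k}(\Sigma)$.
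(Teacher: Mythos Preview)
Your argument is correct and follows essentially the same route as the paper: both apply Stokes' theorem to the $(n-1)$-form $\omega_1\wedge *\omega_2$, expand with the graded Leibniz rule, and reduce the second term using the definition of $d^*$ together with $**=(-1)^{j(n-j)}$. The only cosmetic difference is that the paper rewrites $\omega_1\wedge d(*\omega_2)$ as $(-1)^{?}\,d^*\omega_2\wedge *\omega_1$ and tracks the sign through the swap of wedge factors, whereas you invert the relation $d^*\omega_2=(-1)^{nk+n+1}*d(*\omega_2)$ directly to get $d(*\omega_2)=(-1)^k*(d^*\omega_2)$; the resulting parity computation is the same.
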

	
	\begin{proof}
		Let $\omega = \omega_1 \wedge *\omega_2$. Then,
		$$
		\begin{aligned}
			d\omega &= d\omega_1 \wedge *\omega_2 + (-1)^{k-1} \omega_1 \wedge (d *\omega_2) \\
			&= d\omega_1 \wedge *\omega_2 + (-1)^{nk+n+k} *\omega_1 \wedge d^* \omega_2 \\
			&= d\omega_1 \wedge *\omega_2 + (-1)^{nk+n+k+(k-1)(n-k+1)} d^* \omega_2 \wedge *\omega_1 \\
			&= d\omega_1 \wedge *\omega_2 - d^* \omega_2 \wedge *\omega_1.
		\end{aligned}
		$$
		Thus, by Stokes' theorem, we obtain
		$$
		\int_{\partial \Sigma} i^* \omega = \int_\Sigma d\omega = \int_\Sigma d\omega_1 \wedge *\omega_2 - \int_\Sigma d^* \omega_2 \wedge *\omega_1 = (d\omega_1, \omega_2) - (d^* \omega_2, \omega_1).
		$$
		
		This proves the result.
	\end{proof}

	\begin{example}
		For $M = \mathbb{R}^2$, one easily sees that for a vector field $v = u_1 e_1 + u_2 e_2 \in \Gamma(T\Sigma)$, we have $v^\flat = u_1 dx_1 + u_2 dx_2 \in \Omega^1(\Sigma)$. Then,
		$$
		d (v^\flat) = (\partial_1 u_2 - \partial_2 u_1) \operatorname{vol}_M = (\operatorname{curl} v) \operatorname{vol}_M.
		$$
		For the Hodge operator, we have
		$$
		*dx_1 = dx_2, \quad *dx_2 = -dx_1, \quad *(v^\flat) = u_1 dx_2 - u_2 dx_1 = (v^\perp )^\flat,
		$$
		where $v^\perp := -u_2 e_1 + u_1 e_2$ is the vector $v$ rotated by $\frac{\pi}{2}$ counterclockwise. Then we obtain
		$$
		-d^* (v^\flat) = *d(u_1 dx_2 - u_2 dx_1) = \partial_1 u_1 + \partial_2 u_2 = \operatorname{div} v.
		$$
		
	\end{example}
	
	The Hodge Laplacian is defined as $\Delta_k := d^*d + dd^*$ on $k$-forms. Naturally, we have $d^*|_{\Omega^0(\Sigma)} = 0$ and $d|_{\Omega^n(\Sigma)} = 0$. We may omit the subscript $k$ when the degree is clear from the context.

\subsection{Busemann Function on a Hadamard Manifold}
	Suppose $H$ is a Hadamard manifold, i.e. a simply-connected, complete Riemannian manifold with non-positive sectional curvature. By the Cartan-Hadamard theorem, $H$ is diffeomorphic to $\mathbb{R}^n$.
	
	Let $\gamma_1$ and $\gamma_2$ be two geodesic rays, $\gamma_i: [0, \infty) \rightarrow H$, $i=1,2$. We always consider geodesics with unit speed. The geodesics $\gamma_1$ and $\gamma_2$ are said to be \textit{asymptotically equivalent} if there exists a constant $C>0$ such that $d(\gamma_1(t), \gamma_2(t)) < C$ for all $t \ge 0$. The set of all asymptotically equivalent classes forms the \textit{ideal boundary} of $H$, denoted by $\partial H$. This boundary can be regarded as the ``point at infinity'' of $H$, meaning that a geodesic $\gamma$ is said to ``converge'' to $\theta \in \partial H$ at infinity if $\gamma$ represents the class $\theta$.
	
	Fix an arbitrary point $o \in H$ and let $\theta \in \partial H$. We can find a unique geodesic ray such that $\gamma(0) = o$ and $[\gamma] = \theta$. The corresponding Busemann function is defined as
	$$
	B_\theta(x) := \lim_{t \rightarrow \infty} \left(d(x, \gamma(t)) - t\right).
	$$
	
	\begin{prop}[\cite{ISS2014}]
		The Busemann function $B_\theta$ satisfies the following properties:
		\begin{enumerate}[(i)]
			\item $B_\theta$ is a convex function with $C^2$ regularity.
			\item $|dB_\theta| \equiv 1$.
		\end{enumerate}
	\end{prop}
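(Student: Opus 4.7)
My plan is to prove the two stated properties in the order: existence of the defining limit, convexity, $C^1$-regularity with unit gradient, and finally $C^2$-regularity.

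First, I would fix $x \in H$ and set $\varphi_x(t) := d(x, \gamma(t)) - t$. The triangle inequality $d(x, \gamma(s)) \le d(x, \gamma(t)) + (s - t)$ for $s \ge t$, combined with the unit speed of $\gamma$, shows that $\varphi_x$ is non-increasing in $t$, while the reverse triangle inequality $d(x, \gamma(t)) \ge t - d(x, o)$ gives $\varphi_x(t) \ge -d(x, o)$. Hence the limit $B_\theta(x)$ exists for every $x \in H$. For convexity of $B_\theta$, I would use that on a Hadamard manifold the distance function $y \mapsto d(y, p)$ is convex for every $p \in H$, which follows from Jacobi field comparison in nonpositive curvature. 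Consequently, for each $t$ the function $x \mapsto \varphi_x(t)$ is convex, and pointwise limits of convex functions remain convex.

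Next, for the gradient identity and $C^1$-regularity, I would consider the unit-speed geodesic $\sigma_{x,t}$ from $x$ to $\gamma(t)$, which is unique on a Hadamard manifold. Then $d(\cdot, \gamma(t))$ is smooth away from $\gamma(t)$ with unit gradient $\nabla d(\cdot, \gamma(t))(x) = -\dot\sigma_{x,t}(0)$. As $t \to \infty$, asymptoticity of geodesic rays on a Hadamard manifold yields $\dot\sigma_{x,t}(0) \to \dot\sigma_{x,\theta}(0)$, where $\sigma_{x,\theta}$ is the unique ray from $x$ in the class $\theta$; Toponogov-type comparison makes this convergence locally uniform in $x$. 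This upgrades the pointwise convergence $\varphi_x(t) \to B_\theta(x)$ to $C^1$-convergence on compact sets and gives $|dB_\theta| \equiv 1$.

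The main obstacle will be the $C^2$-regularity, for which I would appeal to the Riccati equation governing the shape operators $S_t := -\operatorname{Hess} d(\cdot, \gamma(t))$ of the distance spheres. Along the gradient flow line $\sigma_{x,t}$ one has $S_t' + S_t^2 + R_t = 0$, where $R_t$ is the tidal curvature operator associated to $\dot\sigma_{x,t}$. In nonpositive curvature, a monotonicity and boundedness argument combined with local boundedness of the curvature operator yields a locally uniform limit $S_\infty(x) = -\operatorname{Hess} B_\theta(x)$ as $t \to \infty$ that is continuous in $x$. Combined with the convexity already obtained and the $C^1$-convergence from the previous step, this produces the claimed $C^2$-regularity. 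I would model this final step on the Riccati treatment in Heintze--Im Hof \cite{HH1977} and on the Hadamard analysis in \cite{ISS2014}.
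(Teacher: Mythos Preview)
The paper does not give its own proof of this proposition; it is quoted as a known fact from \cite{ISS2014} (with the Riccati/horosphere analysis going back to \cite{HH1977}) and used as background in Section~\ref{sec:Pre}. So there is nothing in the paper to compare your argument against.

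That said, your outline is exactly the standard route to this result and is essentially what one finds in the cited sources: monotonicity and lower boundedness of $t\mapsto d(x,\gamma(t))-t$ for existence; convexity of $d(\cdot,p)$ on a Hadamard manifold passing to the limit; locally uniform convergence of the initial directions $\dot\sigma_{x,t}(0)\to\dot\sigma_{x,\theta}(0)$ for $C^1$ and $|dB_\theta|\equiv 1$; and the Riccati comparison $S'_t+S_t^2+R_t=0$ for a locally uniform limit of the Hessians giving $C^2$. One small point worth tightening in an actual write-up: the monotonicity argument for the shape operators $S_t$ in nonpositive curvature gives boundedness and a limit at each point, but to conclude continuity of $S_\infty$ in $x$ (hence genuine $C^2$, not just twice differentiability) you need to invoke uniformity of the Riccati estimates on compact sets together with continuous dependence of $\sigma_{x,\theta}$ on $x$; this is precisely what Heintze--Im Hof supply, so your citation is appropriate.
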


	The level set of $B_\theta$ containing $x \in H$ is called a horosphere centered at $\theta$ passing through $x$, denoted by $\mathcal{H}_{(x,\theta)}$, i.e.,
	$$
	\mathcal{H}_{(x,\theta)} = \{y \in H \mid B_{\theta}(y) = B_{\theta}(x)\}.
	$$
	By the implicit function theorem, the horosphere $\mathcal{H}_{(x,\theta)}$ has $C^2$ regularity.
	
	The gradient field $\nabla B_{\theta}$, when restricted to $\mathcal{H}_{(x,\theta)}$, yields the $C^1$ unit vector field $N$ that is outward normal to $\mathcal{H}_{(x,\theta)}$. The second fundamental form is given by the Hessian
	$$
	h(v, w) = \langle -\nabla_v N, w \rangle = -\nabla^2 B_\theta(v, w), \quad v, w \in T_y \mathcal{H}_{(x,\theta)}.
	$$
	This form is negative semi-definite because $B_\theta$ is convex.

\section{Sobolev Space for Differential Forms}\label{sec:SobolevForm}

\subsection{General Case}
For a $\partial$-manifold $\Sigma\subset M,$ the form space $L^2\Omega^k(\Sigma)$, $0\le k\le n$, is defined as the completion of $\Omega^k(\Sigma)$ with respect to the induced norm
$$
\|\omega\|_{L^2} := \sqrt{(\omega, \omega)}.
$$
Moreover, the Sobolev space $H^s\Omega^k(\Sigma)$ is defined as the completion of $\Omega^k(\Sigma)$ with respect to the norm
\begin{align}
	\|\omega\|_{H^s} := \sqrt{\sum_{r=0}^s \|\nabla^r \omega\|_{L^2}^2}, \quad s \ge 0, \label{eq:Hsnorm}
\end{align}
where $\nabla$ is the Levi-Civita connection on $M$. Similarly, one can define $W^{s,p}\Omega^k(\Sigma)$ generally; see \cite{S1995}. For $s < 0$, the corresponding Sobolev space is defined as the dual space of $H^{-s}.$

It is clear that these spaces are Hilbert spaces with the corresponding inner product. Since $\|\omega\|_{L^2} = \|*\omega\|_{L^2}$ for all $\omega \in \Omega^k(\Sigma)$, the Hodge star operator has a continuous extension
$$
* : L^2\Omega^k(\Sigma) \rightarrow L^2\Omega^{n-k}(\Sigma),
$$
and it is an isometry. Given that $*(\nabla_Y \omega) = \nabla_Y(*\omega)$ for all $Y \in T\Sigma$ and $\omega \in \Omega^k(\Sigma)$, we also have $\|\omega\|_{H^s} = \|*\omega\|_{H^s}$, and thus the Hodge star operator extends to an isometry between Sobolev spaces as well.

By the standard theory of Sobolev spaces \cite{S1995}, regarding the trace, we have
$$
\|i^*\omega\|_{H^s\Omega^k(\partial \Sigma)} \le \|\omega|_{\partial \Sigma}\|_{H^s\Gamma(\Lambda^k(\partial \Sigma))} \le C\|\omega\|_{H^{s+1}\Omega^k(\Sigma)},
$$
from which we obtain the continuous (and compact) linear map
$$
i^* : H^{s+1}\Omega^k(\Sigma) \rightarrow H^s\Omega^k(\partial \Sigma), \quad s \ge 0.
$$

The differential operators $d$ and $d^*$ continuously extend to the Sobolev space such that
$$
d: H^{s+1}\Omega^k(\Sigma) \rightarrow H^s\Omega^{k+1}(\Sigma), \quad d^*: H^{s+1}\Omega^k(\Sigma) \rightarrow H^s\Omega^{k-1}(\Sigma), \quad s \ge 0,
$$
since both can be expressed using $\nabla$ after choosing a coordinate system.

By the continuity of the exterior derivative and the trace formula, Stokes' theorem holds for Sobolev spaces as well, allowing us to extend Green's formula~\eqref{eq:Green} to Sobolev spaces. Specifically, we have
\begin{align}
	(d \omega_1, \omega_2) = (\omega_1, d^* \omega_2) + \int_{\partial \Sigma} i^*(\omega_1 \wedge *\omega_2), \quad \forall \omega_1\in H^1\Omega^{k-1}(\Sigma),\:\omega_2\in H^1\Omega^k(\Sigma).\label{eq:SobolevGreen}
\end{align}

We define the following form space for $s \ge 1$:
\begin{align}
	H^s\Omega_D^k(\Sigma) = \{\omega \in H^s\Omega^k(\Sigma) \mid i^*(*\omega) = 0\}. \label{eq:DSobolev}
\end{align}
As we will see later, this condition represents the absence of a normal component for forms at the boundary. The subscript $D$ represents exactly the Dirichlet condition.

Using standard techniques for elliptic equations, we can derive the following Hodge decomposition on the $\partial$-manifold $\Sigma$, which will be crucial in our paper.

\begin{thm}[\cite{S1995}, Corollary~2.4.9]\label{thm:mainDecomposition}
	Let $\Sigma$ be a $\partial$-manifold. We have the following $L^2$-orthogonal decomposition:
	\begin{align}
		L^2\Omega^k(\Sigma) = dH^1\Omega^{k-1}(\Sigma) \oplus d^*H^1\Omega^{k+1}_D(\Sigma)\oplus\HH^k_D(\Sigma),\label{eq:mainDec}
	\end{align}
	where $\HH^k_D(\Sigma)$ is the harmonic form with Dirichlet boundary defined as
	$$
	\HH^k_D(\Sigma):=\{\omega\in H^1\Omega^k_D(\Sigma)\mid d\omega=0,\,d^*\omega=0\}.
	$$
\end{thm}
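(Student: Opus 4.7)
The plan is to verify the three subspaces in the claimed decomposition are pairwise $L^2$-orthogonal, and then to characterize the $L^2$-orthogonal complement of the sum of the first two as exactly $\HH^k_D(\Sigma)$.

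First I would verify pairwise orthogonality via two applications of the Sobolev Green's formula \eqref{eq:SobolevGreen}. For $\alpha\in H^1\Omega^{k-1}(\Sigma)$ and $\beta\in H^1\Omega^{k+1}_D(\Sigma)$, integrating by parts twice yields
$(d\alpha,d^*\beta)=(d^2\alpha,\beta)-\int_{\partial\Sigma}i^*(d\alpha\wedge *\beta)$,
which vanishes because $d^2=0$ and $i^*(*\beta)=0$ from the Dirichlet condition on $\beta$. For $h\in\HH^k_D(\Sigma)$, the defining identities $dh=d^*h=0$ together with $i^*(*h)=0$ similarly dispose of $(d\alpha,h)$ and $(d^*\beta,h)$ after one application of Green's formula each.

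Next I would characterize the orthogonal complement of $W:=dH^1\Omega^{k-1}(\Sigma)\oplus d^*H^1\Omega^{k+1}_D(\Sigma)$. Take $\omega\in L^2\Omega^k(\Sigma)$ with $\omega\perp W$. Testing $(\omega,d\phi)=0$ against $\phi\in H^1\Omega^{k-1}(\Sigma)$, first restricted to be compactly supported in the interior, gives $d^*\omega=0$ in the distributional sense; subsequently letting $\phi$ reach the boundary, the Green boundary integral combined with the freedom of $i^*\phi$ forces the Dirichlet condition $i^*(*\omega)=0$. Testing $(\omega,d^*\psi)=0$ against $\psi\in H^1\Omega^{k+1}_D(\Sigma)$ likewise yields $d\omega=0$ weakly, the boundary term in Green's formula vanishing thanks to $i^*(*\psi)=0$.

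The principal analytic hurdle, I expect, is promoting this weak information to genuine $H^1$ regularity so that $\omega$ truly lies in $\HH^k_D(\Sigma)$ in the sense of the statement. I would invoke a Gaffney-type elliptic estimate
$\|\omega\|_{H^1}\lesssim \|\omega\|_{L^2}+\|d\omega\|_{L^2}+\|d^*\omega\|_{L^2}$
valid under the Dirichlet boundary condition $i^*(*\omega)=0$, which requires straightening $\partial\Sigma$ and working chart-by-chart with tangential/normal decompositions of forms. Once $\omega\in H^1$ is secured, Rellich compactness forces $\HH^k_D(\Sigma)$ to be finite-dimensional; a standard closed-range argument (Poincaré inequality modulo $\HH^k_D$) then upgrades $dH^1\Omega^{k-1}(\Sigma)$ and $d^*H^1\Omega^{k+1}_D(\Sigma)$ to closed subspaces of $L^2\Omega^k(\Sigma)$. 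With pairwise orthogonality and the closed-range property in hand, the orthogonal direct-sum decomposition \eqref{eq:mainDec} follows from the Hilbert space projection theorem.
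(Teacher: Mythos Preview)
The paper does not supply its own proof of this theorem; it is quoted as Corollary~2.4.9 of Schwarz's monograph \cite{S1995}, prefaced only by the remark that it follows from ``standard techniques for elliptic equations.'' There is therefore no in-paper argument to compare your proposal against.

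Your outline is the standard route to this result and is sound. Two points of care are worth flagging. First, when you integrate by parts in $(d\alpha,d^*\beta)$, neither $d\alpha$ nor $d^*\beta$ lies in $H^1$ a priori, so the Green identity \eqref{eq:SobolevGreen} does not apply directly; one should argue first for smooth $\alpha,\beta$ with $i^*(*\beta)=0$ and then pass to the limit by density---routine, but worth making explicit. Second, and more substantive, Gaffney's inequality as you state it is an \emph{a~priori} estimate on $H^1\Omega^k_D$; it does not by itself lift an $L^2$ form with $d\omega=d^*\omega=0$ and weak normal trace zero into $H^1$. For that one needs either a Friedrichs-type mollification compatible with the boundary condition, or---closer to Schwarz's treatment---the elliptic regularity theory for the Hodge Laplacian with absolute boundary conditions, which simultaneously delivers the $H^1$ regularity of $\HH^k_D$ and the closedness of $dH^1\Omega^{k-1}$ and $d^*H^1\Omega^{k+1}_D$. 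Your last paragraph shows you anticipate this work, so the plan is fine; just be aware that this regularity/closed-range step is where essentially all of the analytic content lives, and the orthogonality verifications are comparatively formal.
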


Moreover, we have the following Hodge isomorphism.
\begin{thm}[\cite{S1995}, Theorem~2.6.1]\label{thm:isomorphism}
	For the $k$-th cohomology group $H^k_{dR}(\Sigma)$, we have
	$$
	H^k_{dR}(\Sigma)\cong \HH^k_D(\Sigma).
	$$
\end{thm}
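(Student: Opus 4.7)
The plan is to construct a linear isomorphism $\Phi\colon \HH^k_D(\Sigma)\to H^k_{dR}(\Sigma)$ sending a Dirichlet harmonic form to its de Rham class. To ensure this is well defined, I would first invoke elliptic regularity for the Hodge Laplacian under the boundary condition $i^*(*\omega)=0$ to conclude that any $\omega\in\HH^k_D(\Sigma)$ is smooth up to the boundary. Since $d\omega=0$ by definition, the class $[\omega]\in H^k_{dR}(\Sigma)$ is then well defined, and linearity of $\Phi$ is immediate.

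Injectivity follows from a single integration by parts. If $\omega\in\HH^k_D(\Sigma)$ is exact, say $\omega=d\eta$ for some smooth $\eta\in\Omega^{k-1}(\Sigma)$, then by the Sobolev Green's formula \eqref{eq:SobolevGreen},
\begin{align*}
\|\omega\|_{L^2}^2=(d\eta,\omega)=(\eta,d^*\omega)+\int_{\partial\Sigma}i^*(\eta\wedge *\omega)=0,
\end{align*}
since $d^*\omega=0$ by harmonicity and $i^*(\eta\wedge *\omega)=i^*\eta\wedge i^*(*\omega)=0$ by the Dirichlet boundary condition. Hence $\omega\equiv 0$ and $\Phi$ is injective.

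For surjectivity, I would take a smooth closed $\alpha\in\Omega^k(\Sigma)$ and apply the Hodge decomposition from Theorem~\ref{thm:mainDecomposition} to write
\begin{align*}
\alpha = d\beta + d^*\gamma + h,\quad \beta\in H^1\Omega^{k-1}(\Sigma),\;\gamma\in H^1\Omega^{k+1}_D(\Sigma),\;h\in\HH^k_D(\Sigma).
\end{align*}
Since $d\alpha=0$, $d^2=0$, and $dh=0$, applying $d$ to the decomposition gives $dd^*\gamma=0$. Pairing with $\gamma$ and using Green's formula, the boundary term vanishes thanks to $i^*(*\gamma)=0$, so $\|d^*\gamma\|_{L^2}^2=0$ and $d^*\gamma=0$. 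Thus $\alpha-h=d\beta$, which yields $[\alpha]=[h]=\Phi(h)$ once a smoothing step is justified. The principal technical hurdle lies precisely in this last step: the primitive $\beta$ is a priori only in $H^1$, whereas the classical de Rham complex is built from smooth forms. I would resolve it by choosing $\beta$ in the orthogonal complement $(\ker d)^\perp=d^*H^1\Omega^k_D(\Sigma)$ and invoking interior and boundary elliptic regularity for the Hodge Laplacian on this component to promote $\beta$, and hence the relation $\alpha-h=d\beta$, to the smooth category.
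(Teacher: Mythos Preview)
The paper does not give its own proof of this statement; it is quoted from Schwarz's monograph \cite{S1995} as an external input. Your outline is essentially the standard argument one finds there, and it is correct in spirit.

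One small technical point in your surjectivity step deserves care. To conclude $d^*\gamma=0$ you pair $dd^*\gamma$ with $\gamma$ and invoke Green's formula~\eqref{eq:SobolevGreen}, but that formula as stated in the paper requires both entries to lie in $H^1$, whereas $d^*\gamma$ is a priori only in $L^2$. A cleaner route sidesteps this: for any $\sigma\in H^1\Omega^{k+1}_D(\Sigma)$, apply \eqref{eq:SobolevGreen} directly to the pair $(\alpha,\sigma)$ to get
\[
(\alpha,d^*\sigma)=(d\alpha,\sigma)-\int_{\partial\Sigma}i^*(\alpha\wedge *\sigma)=0,
\]
since $d\alpha=0$ and $i^*(\alpha\wedge *\sigma)=i^*\alpha\wedge i^*(*\sigma)=0$. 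Thus $\alpha\perp d^*H^1\Omega^{k+1}_D(\Sigma)$, and the $d^*\gamma$ component in the orthogonal decomposition~\eqref{eq:mainDec} vanishes automatically. The remaining issue you flag---upgrading the $H^1$ primitive $\beta$ to a smooth one so that $[\alpha]=[h]$ holds in the smooth de Rham complex---is indeed the only genuine technical point, and your proposed resolution via elliptic regularity on the co-exact part is the standard one.
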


When $\Sigma$ is simply-connected, we have $\HH^k_D(\Sigma)=0,$ and in \eqref{eq:mainDec} we only have the leading two spaces. This case is also known as the Helmholtz decomposition when we are dealing with vector fields on the plane.

\subsection{Surface Case}
In this subsection, we will primarily focus on the case when $n=2$. Consider the space
$$
\EEE(\Sigma) := \{\omega \in L^2\Omega^1(\Sigma) \mid d^*\omega \in L^2\Omega^0(\Sigma) = L^2(\Sigma)\}
$$
with the corresponding norm
$$
\|\omega\|^2_\EEE := \|\omega\|^2_{L^2} + \|d^*\omega\|^2_{L^2}.
$$
This is a Hilbert space, and $\Omega^1(\Sigma)$ is dense in $\EEE(\Sigma)$ under this norm.

Let $\nu$ be the outer normal vector of $\partial \Sigma$. The mapping
$$
\Omega^1(\Sigma) \ni \omega \mapsto \omega|_{\partial \Sigma}(\nu)
$$
extends to the map $\EEE(\Sigma) \rightarrow H^{-\frac{1}{2}}(\partial \Sigma)$; see \cite[Chapter XIX, \S1, Theorem 2]{DL2000} for the case on the plane $\R^2$. The situation is similar for a manifold, particularly for Hadamard surface, since there is a direct diffeomorphism to the plane, and when we focus on a compact domain.

In fact, we have 
$$
i^*(*\omega) = \omega|_{\partial\Sigma}(\nu) \operatorname{vol}_{\partial\Sigma},
$$
so that $i^*(*\omega)$ can be used to express the boundary condition as in \eqref{eq:DSobolev}. Notably, Green's formula \eqref{eq:Green} extends to $\EEE(\Sigma)$ as follows
\begin{align}
	(d u, \omega) = (u, d^* \omega) + \int_{\partial \Sigma} i^*(*\omega) u, \quad \forall u \in H^1(\Sigma), \, \omega \in \EEE(\Sigma). \label{eq:EEEGreen}
\end{align}

Similarly, for all $\omega \in *\EEE(\Sigma)$, we have 
$$
d\omega = -*(d^*(*\omega)) \in *L^2(\Sigma) = L^2\Omega^2(\Sigma),
$$
and therefore
\begin{align}
	(d \omega, A) = (\omega, d^* A) + \int_{\partial \Sigma} (*A)(i^*\omega), \quad \forall A \in H^1\Omega^2(\Sigma) = * H^1(\Sigma), \omega \in *\EEE(\Sigma). \label{eq:EEEGreen'}
\end{align}

Let $u \in H^1(\Sigma)$ such that $\Delta u = d^*du \in L^2(\Sigma)$ in the weak sense. Then $\omega = du \in \EEE(\Sigma)$, and hence
$$
\partial_\nu u = du(\nu) = (*_{\partial\Sigma}) i^*(*du) \in H^{-\frac{1}{2}}(\partial \Sigma)
$$
is well-defined. We will use this to define the Neumann boundary condition.

In the setting of Theorem~\ref{thm:mainDecomposition}, for the surface case with $H^1_{dR}(\Sigma)=0$ there is a well-known Helmholtz decomposition
$$
L^2\Omega^1(\Sigma) = dH^1(\Sigma) \oplus d^* H^1\Omega^2_D(\Sigma) = dH^1(\Sigma) \oplus *dH^1_0(\Sigma).
$$

\section{Eigenvalue Problems on Riemannian Surfaces}\label{sec:surface}

\subsection{Dirichlet and Neumann Problems}
Consider the case when $n=2$, i.e., $M$ is a Riemannian surface, and $\Sigma$ is a bounded smooth domain on $M$. The domain of the Dirichlet problem is defined as
$$
\dom (\Delta_D) = \{A \in H^1\Omega^2_D(\Sigma) \mid \Delta A = dd^*A \in L^2\Omega^2(\Sigma)\},
$$
and the domain of the Neumann problem is
$$
\dom (\Delta_N) = \{u \in H^1(\Sigma) \mid \Delta u = d^*du \in L^2(\Sigma), \ \partial_\nu u|_{\partial \Sigma} = 0\}.
$$
Here we note that $*\dom(\Delta_D) \subset H_0^1(\Sigma)$ corresponds to the domain of the classical Dirichlet problem, and $\partial_\nu u|_{\partial\Sigma}$ is well-defined because $du \in \EEE(\Sigma).$ The Neumann boundary condition is equivalent to $i^*(*du) = 0.$

Consider the Dirichlet eigen-form $\Phi \in \dom (\Delta_D)$ such that $\Delta_D \Phi = dd^*\Phi = \lambda \Phi$. Let $\Phi = \varphi \: \operatorname{vol}_M$, where $\varphi \in H^1_0(\Sigma)$ satisfies 
$$
\Delta \varphi = d^*d\varphi = -*d*d(**\varphi) = *dd^*\Phi = \lambda \varphi,
$$
which is the classical eigenfunction for the Dirichlet problem on $\Sigma$.

Note that for $\varphi \in *\dom(\Delta_D) \subset H_0^1(\Sigma)$, we have $d\varphi \in \EEE(\Sigma)$, and then 
\begin{align}
	i^*(d\varphi) = d(i^*\varphi) = 0.\label{eq:DirichletCondition}
\end{align}
Here, we use the continuity of $i^*$ and $d$, as well as their commutativity for smooth differential forms. For a detailed proof, see \cite[Lemma~2.2]{R2023}.

The Neumann eigenfunction is the classical one: $\psi \in \dom(\Delta_N)$ such that $\Delta_N \psi = d^*d\psi = \mu \psi$. Denote the Neumann eigenvalues by 
$$
0 = \mu_1 < \mu_2 \le \mu_3 \le \cdots
$$
with multiplicities, and let
$$
0 < \lambda_1 < \lambda_2 \le \lambda_3 \le \cdots
$$
be the eigenvalues of $\Delta_D$, also with multiplicities.

Take orthonormal eigenbases $(\psi_1, \psi_2, \cdots)$ and $(\varphi_1, \varphi_2, \cdots)$ of $L^2(\Sigma)$ such that $\Delta_N\psi_i = \mu_i\psi_i$ and $\Delta_D \varphi_i = \lambda_i\varphi_i$ hold for all $i \in \mathbb{Z}_+.$ Then $\{\Phi_i := \varphi_i \operatorname{vol}_M\}$ form orthonormal eigenbases of $L^2\Omega^2(\Sigma)$ with $\Delta_D \Phi_i = \lambda_i \Phi_i$.

\subsection{Spectral Decomposition and Dirichlet Form} \label{sec:Dirichlet}

\begin{defi}
	We define the Dirichlet form $\alpha$ on $H^1\Omega^k_D(\Sigma)$ by
	$$
	\alpha[\omega_1, \omega_2] := (d\omega_1, d\omega_2) + (d^*\omega_1, d^*\omega_2),
	$$
	and we write $\alpha[\omega] := \alpha[\omega, \omega]$ for simplicity.
\end{defi}

Clearly, $\alpha$ is bounded since $d$ and $d^*$ are continuous in $H^1\Omega^k_D(\Sigma)$. It is positive semi-definite with the kernel being $\HH^k(\Sigma)\cong H^k_{dR}(\Sigma)$ by Theorem~\ref{thm:isomorphism}.

We define the following inner product on $H^1\Omega^k_D(\Sigma)$
$$
(\omega_1, \omega_2)_\alpha := \alpha[\omega_1, \omega_2] + (\omega_1, \omega_2).
$$
By Gaffney's inequality \cite[Corollary~2.1.6]{S1995}, the norm induced by $(-,-)_\alpha$ is equivalent to the canonical $H^1$ norm \eqref{eq:Hsnorm} on the space $H^1_D\Omega^k(M)$, and we denote it by $\| - \|_\alpha$.

Let $A: H^1\Omega^k_D(\Sigma) \to H^{-1}\Omega^k(\Sigma)$ be the bounded $L^2$-self-adjoint operator such that
$$
\alpha[u, v] = (Au, v), \quad \forall u, v \in H^1\Omega^k_D(\Sigma).
$$
In fact, by Green's formula \eqref{eq:SobolevGreen}, we have $Au = \Delta u$ for $u \in H^2\Omega^k_D(\Sigma)$ with $i^*(*du) = 0$.

From now on, we will focus on the case for $n = 2$ and $k = 0, 1, 2$.

\begin{prop}\label{prop:specDec}
	Let $\Sigma$ be a bounded smooth domain on a smooth surface. The $1$-forms $\left\{\frac{1}{\sqrt{\mu_i}}d\psi_i\right\}_{i=2}^\infty$ and $\left\{\frac{1}{\sqrt{\lambda_i}}d^*\Phi_i\right\}_{i=1}^\infty$ form orthonormal eigen-bases of $dH^1(\Sigma)$ and $d^*H^1\Omega_D^2(\Sigma)$, respectively. These eigen-bases satisfy
	$$
	A(d\psi_i) = \mu_i d\psi_i, \quad A(d^*\Phi_i) = \lambda_i d^*\Phi_i.
	$$
	Let $\Spec_+$ be the positive spectrum. Then
	\begin{align}
		\Spec_+ A = \Spec_+ \Delta_D \cup \Spec_+ \Delta_N .\label{eq:spectrumDec}
	\end{align}
\end{prop}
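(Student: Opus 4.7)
The plan is to verify the eigen-equations and orthonormality by direct computation with Green's formula, then read off the spectral statement from the Hodge decomposition in Theorem~\ref{thm:mainDecomposition}. First I would confirm that both candidate families lie in $H^1\Omega^1_D(\Sigma)$, i.e.\ satisfy $i^*(*\cdot)=0$. For $d\psi_i$ this is precisely the Neumann condition $\partial_\nu\psi_i=0$. For $d^*\Phi_i$, writing $\Phi_i=\varphi_i\vol_M$ with $\varphi_i\in H^1_0(\Sigma)$, the two-dimensional identity $d^*\Phi_i=-*d\varphi_i$ combined with $**=-\mathrm{id}$ on $1$-forms yields $*(d^*\Phi_i)=d\varphi_i$, hence $i^*(*(d^*\Phi_i))=d(i^*\varphi_i)=0$ by~\eqref{eq:DirichletCondition}.

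Next I would prove the eigen-equations and orthonormality relations in one stroke. For any test form $\omega\in H^1\Omega^1_D(\Sigma)$, using $d^2=0$ and $(d^*)^2=0$, one computes $\alpha[d\psi_i,\omega]=(d^*d\psi_i,d^*\omega)=\mu_i(\psi_i,d^*\omega)$ and $\alpha[d^*\Phi_i,\omega]=(dd^*\Phi_i,d\omega)=\lambda_i(\Phi_i,d\omega)$. Applying Green's formulas~\eqref{eq:EEEGreen} and~\eqref{eq:EEEGreen'}, the boundary contributions vanish by $i^*(*\omega)=0$ and by $i^*(*\Phi_i)=\varphi_i|_{\partial\Sigma}=0$ respectively; the two expressions collapse to $\mu_i(d\psi_i,\omega)$ and $\lambda_i(d^*\Phi_i,\omega)$, establishing $A(d\psi_i)=\mu_i d\psi_i$ and $A(d^*\Phi_i)=\lambda_i d^*\Phi_i$. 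Specializing $\omega$ to the candidates themselves delivers $(d\psi_i,d\psi_j)=\mu_j\delta_{ij}$ and $(d^*\Phi_i,d^*\Phi_j)=\lambda_i\delta_{ij}$; the cross relation $(d\psi_i,d^*\Phi_j)=(d^2\psi_i,\Phi_j)=0$ follows from one more Green-type identity, so the rescaled families are orthonormal and mutually orthogonal in $L^2\Omega^1(\Sigma)$.

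For completeness I need the two families to exhaust $dH^1(\Sigma)$ and $d^*H^1\Omega^2_D(\Sigma)$. Since $\{\psi_i\}$ is a spectral basis for the Neumann Laplacian with compact resolvent, any $u\in H^1(\Sigma)$ expands as $u=\sum c_i\psi_i$ with $\|du\|_{L^2}^2=\sum\mu_i c_i^2<\infty$, so $du=\sum_{i\ge 2}c_i\,d\psi_i$ converges in $L^2\Omega^1$; the analogous argument with $\{\Phi_i\}$ treats $d^*H^1\Omega^2_D(\Sigma)$. Combining this with the $L^2$-orthogonal splitting of Theorem~\ref{thm:mainDecomposition} and the vanishing of $A$ on the finite-dimensional harmonic summand $\HH^1_D(\Sigma)$ shows that $A$ is block-diagonal with precisely the asserted eigenvalues, so~\eqref{eq:spectrumDec} follows at once. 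The main obstacle I expect is purely bookkeeping: converting the classical Dirichlet/Neumann boundary conditions into the form-theoretic conditions $i^*(*\cdot)=0$ and making sure each invocation of Green's formula sits inside the hypotheses of~\eqref{eq:EEEGreen} or~\eqref{eq:EEEGreen'} rather than the smoother version~\eqref{eq:SobolevGreen}.
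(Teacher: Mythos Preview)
Your proposal is correct and follows essentially the same route as the paper: verify membership in $H^1\Omega^1_D(\Sigma)$ via the boundary conditions, establish the eigen-equations by pairing $\alpha[\,\cdot\,,\omega]$ against a test form and integrating by parts, and then invoke the Hodge decomposition of Theorem~\ref{thm:mainDecomposition} for the spectral statement. The only cosmetic difference is that the paper handles completeness by the contrapositive (``$d\psi\perp d\psi_i$ for all $i\ge 2$ forces $\psi$ constant'') rather than your series-expansion argument, and it does not spell out the cross-orthogonality $(d\psi_i,d^*\Phi_j)=0$ since that is already contained in the $L^2$-orthogonality of~\eqref{eq:mainDec}.
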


\begin{proof}
	For $i \ge 2$, $d\psi_i \in H^1\Omega_D^1(\Sigma)$, since $d(d\psi_i) = 0$, $d^*(d\psi_i) = \mu_i\psi_i \in L^2(\Sigma)$, and $i^*(*d\psi_i) = 0$ by the Neumann boundary condition. Similarly, $d^*\Phi_i \in H^1\Omega_D^1(\Sigma)$ for $i \ge 1$, and the boundary condition holds by the Dirichlet condition as in \eqref{eq:DirichletCondition},
	$$
	i^*(*d^*\Phi_i) = -i^*(**d*\Phi_i) = -i^*(d\varphi_i) = 0.
	$$
	
	The orthogonality follows from Green's formula:
	$$
	(d^*\Phi_i, d^*\Phi_j) = (dd^*\Phi_i, \Phi_j) = \lambda_i (\Phi_i, \Phi_j), \quad (d\psi_i, d\psi_j) = (d^*d\psi_i, \psi_j) = \mu_i (\psi_i, \psi_j).
	$$
	
	Suppose that there exists a function $\psi \in H^1(\Sigma)$ such that $d\psi \perp d\psi_i$ for all $i \ge 2$. Then
	$$
	0 = (d\psi, d\psi_i) = \mu_i (\psi, \psi_i), \quad \forall i \ge 2,
	$$
	which implies that $\psi = c\psi_1$ is a constant as $\{\psi_i\}_{i=1}^\infty$ is an orthonormal basis of $L^2(\Sigma)$. Thus, $d\psi = 0$, showing that $\{\frac{1}{\sqrt{\mu_i}}d\psi_i\}_{i=2}^\infty$ is an orthonormal basis of $dH^1(\Sigma)$. Similarly, $\{\frac{1}{\sqrt{\lambda_i}}d^*\Phi_i\}_{i=1}^\infty$ is an orthonormal basis of $d^*H^1\Omega_D^2(\Sigma)$.
	
	Finally, they are eigenforms of $A$ because for any $v \in H^1\Omega_D^1(\Sigma)$,
	$$
	(Ad^*\Phi_i, v) = \alpha[d^*\Phi_i, v] = (dd^*\Phi_i, dv) = \lambda_i (\Phi_i, dv) = \lambda_i (d^*\Phi_i, v),
	$$
	$$
	(Ad\psi_i, v) = \alpha[d\psi_i, v] = (d^*d\psi_i, d^*v) = \mu_i (\psi_i, d^*v) = \mu_i (d\psi_i, v).
	$$
	Combined with the Hodge decomposition in Theorem~\ref{thm:mainDecomposition}, this proves the positive spectral decomposition \eqref{eq:spectrumDec}.
\end{proof}

\begin{rem}\label{rem:general}
	In fact, one can generalize the results by replacing $\Delta_D$ with the downward Laplacian $L_k = d_{k-1}d^*_k$ and $\Delta_N$ with the upward Laplacian $U_k = d^*_{k+1}d_k$. This leads to the positive spectrum decomposition for the Laplacian $\Delta_k$ acting on general Sobolev $k$-forms
	$$
	\Spec_+\left(\Delta_k\right) = \Spec_+\left(U_{k-1}\right) \cup \Spec_+\left(L_{k+1}\right).
	$$
	
	The proof follows from a similar method presented before.
\end{rem}

Finally, consider the Rayleigh quotient defined by $\eta(\omega) = \frac{\alpha[\omega]}{\|\omega\|_{L^2}^2}$. Using standard variational methods, we obtain the following result:

\begin{thm}
	Let $\eta_1 \le \eta_2 \le \cdots$ be the eigenvalues of the operator $A$. Then,
	$$
	\eta_k = \min_{\begin{subarray}{c}
			E \subset H^1\Omega_D^1(\Sigma) \\
			\dim E = k
	\end{subarray}} \max_{\omega \in E \setminus \{0\}} \eta(\omega), \quad \forall k \in \mathbb{Z}_+.
	$$
\end{thm}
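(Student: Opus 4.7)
The goal is to establish the standard Courant--Fischer min--max characterization for the operator $A$ associated with the Dirichlet form $\alpha$ on $H^1\Omega_D^1(\Sigma)$. The plan is to verify the hypotheses under which the abstract min--max principle applies, and then to invoke it.

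First, I would check that the quadratic form $\alpha$ is a closed, symmetric, semi-bounded form on $L^2\Omega^1(\Sigma)$ with form domain $H^1\Omega_D^1(\Sigma)$. Symmetry is immediate from the definition; semi-boundedness (in fact non-negativity) is obvious. Closedness follows from Gaffney's inequality cited in Section~\ref{sec:Dirichlet}, which states that the form norm $\|\cdot\|_\alpha = (\alpha[\cdot]+\|\cdot\|^2_{L^2})^{1/2}$ is equivalent to the usual $H^1$-norm on $H^1\Omega_D^1(\Sigma)$; since the latter is complete, $\alpha$ is closed. By the representation theorem for closed forms, $A$ is a self-adjoint operator on $L^2\Omega^1(\Sigma)$ with $\alpha[u,v] = (Au,v)$ for $u\in\dom(A)$, $v\in H^1\Omega_D^1(\Sigma)$.

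Next, I would verify that $A$ has purely discrete spectrum. Since $\Sigma$ is a bounded smooth domain, the Rellich--Kondrachov compactness theorem, carried over to sections of $\Lambda^1(\Sigma)$ by a partition of unity or local trivialization, gives a compact embedding $H^1\Omega^1(\Sigma)\hookrightarrow L^2\Omega^1(\Sigma)$. Combined with the equivalence of $\|\cdot\|_\alpha$ and $\|\cdot\|_{H^1}$, this makes $(A+1)^{-1}$ a compact operator on $L^2\Omega^1(\Sigma)$, so $A$ has compact resolvent and hence discrete spectrum consisting of eigenvalues $\eta_1\le \eta_2\le \cdots \to \infty$ with finite multiplicity. (Alternatively, discreteness follows directly from Proposition~\ref{prop:specDec}, since $\Spec_+ A$ coincides with the union of the discrete Dirichlet and Neumann spectra, and the kernel is identified with $\HH^1_D(\Sigma)\cong H^1_{dR}(\Sigma)$, which is finite-dimensional.)

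Having established that $A$ is self-adjoint with discrete spectrum and with form domain $H^1\Omega_D^1(\Sigma)$, the classical Courant--Fischer min--max theorem (see, e.g., \cite{ReedSimon4}) yields
$$
\eta_k = \min_{\substack{E\subset H^1\Omega^1_D(\Sigma)\\ \dim E = k}} \max_{\omega\in E\setminus\{0\}} \frac{\alpha[\omega]}{\|\omega\|_{L^2}^2}
= \min_{\substack{E\subset H^1\Omega^1_D(\Sigma)\\ \dim E = k}} \max_{\omega\in E\setminus\{0\}} \eta(\omega).
$$
I expect the main technical point to be the verification that the min--max principle, typically stated for scalar Sobolev spaces, transfers cleanly to the form-valued setting $H^1\Omega_D^1(\Sigma)$; this is essentially bookkeeping once one has Gaffney's inequality, the compact embedding for forms, and the form representation theorem in hand. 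No step requires substantial additional geometry; the argument is a direct application of abstract spectral theory to the framework built in Section~\ref{sec:Dirichlet}.
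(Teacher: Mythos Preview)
Your proposal is correct and matches the paper's approach: the paper itself gives no proof, simply stating that the result follows from ``standard variational methods,'' and your outline supplies precisely those standard details (closedness via Gaffney, compact embedding, then Courant--Fischer). There is nothing to add.
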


\section{Proof of main results}\label{sec:Friedlander}
	\subsection{Proof of the main theorem}
	In this section, we prove Theorem~\ref{thm:main}. Let $\Sigma$ satisfy the curvature condition in Definition~\ref{def:curvature} with the distance function $f$. We define $\nu:=df\in H^1\Omega^1(\Sigma)$. Consequently, $d\nu=0$, $d^*\nu=\Delta f$, and $|\nu|=|\nabla f|\equiv 1$.
	
	Let $(\Phi_i=\varphi_i \vol_M,\lambda_i)$ be the Dirichlet eigenpair of $\Delta_D$. We obtain
	$$
	\|\varphi_i\nu\|_{L^2}^2=\int_{\Sigma}\varphi_i^2|\nu|^2 \, d\vol_M=\|\varphi_i\|_{L^2}^2=1.
	$$
	
	\begin{lemma}\label{lem:main-lem}
		Consider a bounded smooth domain $\Sigma$ on a surface $M$ that satisfies the curvature condition. For Dirichlet eigenfunctions $\{\varphi_i\}_{i=1}^\infty$ on $\Sigma$ and $\nu$ as defined previously, the following inequality holds
		$$
		\alpha[\varphi_i\nu], \alpha[\varphi_i(*\nu)]\le \lambda_i, \quad \forall i\in \Z_+.
		$$
		Furthermore, the cross term vanishes:
		$$
		\alpha[\varphi_i\nu,\varphi_i(*\nu)]=0.
		$$
	\end{lemma}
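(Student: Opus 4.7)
The plan is to expand $\alpha[\varphi_i\nu]=\|d(\varphi_i\nu)\|_{L^2}^2+\|d^*(\varphi_i\nu)\|_{L^2}^2$ pointwise using $d\nu=0$, $d^*\nu=\Delta f$, and $|\nu|\equiv 1$, to eliminate the resulting cross-terms by integration by parts against the Dirichlet condition, and finally to invoke the Bochner formula for the constant function $|\nabla f|^2$. First I would apply the Leibniz rule $d^*(\varphi\omega)=\varphi d^*\omega-\langle d\varphi,\omega\rangle$ to get $d(\varphi_i\nu)=d\varphi_i\wedge\nu$ and $d^*(\varphi_i\nu)=\varphi_i\Delta f-\nabla_\nu\varphi_i$. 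Because $|\nu|\equiv 1$, the pointwise identity $|d\varphi_i\wedge\nu|^2=|d\varphi_i|^2-(\nabla_\nu\varphi_i)^2$ makes the two $(\nabla_\nu\varphi_i)^2$ contributions coming from $|d\omega|^2$ and $|d^*\omega|^2$ cancel, and together with $\int_\Sigma|d\varphi_i|^2\, d\vol_M=\lambda_i$ this yields
\[
\alpha[\varphi_i\nu]=\lambda_i+\int_\Sigma\varphi_i^2(\Delta f)^2\, d\vol_M-\int_\Sigma 2\varphi_i(\Delta f)\nabla_\nu\varphi_i\, d\vol_M.
\]

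Next I would rewrite $2\varphi_i\nabla_\nu\varphi_i=\langle\nabla(\varphi_i^2),\nabla f\rangle$ and integrate by parts, using the Dirichlet condition $\varphi_i|_{\partial\Sigma}=0$ together with $\div\nabla f=-\Delta f$ (the paper's spectral sign convention), to obtain
\[
\int_\Sigma 2\varphi_i(\Delta f)\nabla_\nu\varphi_i\, d\vol_M=\int_\Sigma\varphi_i^2(\Delta f)^2\, d\vol_M-\int_\Sigma\varphi_i^2\langle\nabla(\Delta f),\nabla f\rangle\, d\vol_M.
\]
The $(\Delta f)^2$ terms then cancel and the identity collapses to $\alpha[\varphi_i\nu]=\lambda_i+\int_\Sigma\varphi_i^2\langle\nabla(\Delta f),\nabla f\rangle\, d\vol_M$. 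Since $|\nabla f|^2\equiv 1$ forces $\Delta|\nabla f|^2=0$, the Bochner formula reduces to $\langle\nabla(\Delta f),\nabla f\rangle=|\operatorname{Hess}f|^2+\Ric(\nabla f,\nabla f)=|\operatorname{Hess}f|^2+K$ on a surface, which is non-positive by the curvature condition \eqref{eq:curvatureCondition}. This delivers $\alpha[\varphi_i\nu]\le\lambda_i$.

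For $\alpha[\varphi_i(*\nu)]$, I would carry out the parallel computation, using the surface-specific identities $d^*(*\nu)=*d\nu=0$ and $d(*\nu)=-(\Delta f)\vol_M$. Writing $\tau=(*\nu)^\sharp$ so that $\{\nu^\sharp,\tau\}$ is a pointwise orthonormal frame, the pointwise norms become $|d(\varphi_i\, *\nu)|^2=(\nabla_\nu\varphi_i-\varphi_i\Delta f)^2$ and $|d^*(\varphi_i\, *\nu)|^2=(\partial_\tau\varphi_i)^2$; the orthogonal decomposition $|d\varphi_i|^2=(\nabla_\nu\varphi_i)^2+(\partial_\tau\varphi_i)^2$ produces the very same integral expression, and hence the same bound.

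The main obstacle I anticipate is bookkeeping: the spectral sign convention must be tracked carefully so that $\div\nabla f=-\Delta f$ feeds the correct sign into the integration by parts, and the Dirichlet condition $\varphi_i|_{\partial\Sigma}=0$ must be invoked at exactly the right moment to discard boundary terms. Once the $(\Delta f)^2$ pieces are arranged to cancel, the argument rests entirely on the single geometric input $|\nabla f|\equiv 1$, which turns the Bochner identity into the explicit non-positive quantity $|\operatorname{Hess}f|^2+K$ that the curvature condition is designed to handle.
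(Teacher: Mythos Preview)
Your proposal is correct and follows essentially the same approach as the paper: expand $\alpha[\varphi_i\nu]$ via the Leibniz rule, use $|\nu|=1$ to recombine the pointwise norms into $|d\varphi_i|^2$ plus a cross term and a $(\Delta f)^2$ term, integrate the cross term by parts against the Dirichlet condition, and finish with the Bochner identity for a unit-gradient function. The only cosmetic difference is that the paper phrases the computation in the language of differential forms (e.g.\ $(d\varphi_i^2\wedge *\nu,*d^*\nu)$ and $d^*((\Delta f)\,df)$) rather than your $\div((\Delta f)\nabla f)$, and the paper simply says ``the calculation is similar'' for $\varphi_i(*\nu)$ whereas you spell out the frame computation; the underlying argument and the resulting exact identity $\alpha[\varphi_i\nu]=\lambda_i+\int_\Sigma\varphi_i^2\bigl(|\operatorname{Hess}f|^2+K\bigr)\,d\vol_M$ are the same.
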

	
	\begin{proof}
		For the Dirichlet form $\alpha$, we have
		\begin{align}
			\alpha[\varphi_i\nu] &= (d(\varphi_i\nu),d(\varphi_i\nu)) + (d^*(\varphi_i\nu),d^*(\varphi_i\nu)) \nonumber\\
			&= (d\varphi_i\wedge \nu, d\varphi_i\wedge \nu) + (d\varphi_i\wedge *\nu + \varphi_i d*\nu, d\varphi_i\wedge *\nu + \varphi_i d*\nu) \nonumber\\
			&= \|d\varphi_i\|_{L^2}^2 + 2(d\varphi_i\wedge *\nu, \varphi_i d*\nu) + \|\varphi_i d^*\nu\|_{L^2}^2 \label{eq:third}\\
			&= (d^*d\varphi_i,\varphi_i) + (2\varphi_id\varphi_i\wedge *\nu, (**)d*\nu) + \|\varphi_i d^*\nu\|_{L^2}^2 \nonumber \\
			&= \lambda_i \|\varphi_i\|_{L^2}^2 - (d\varphi_i^2\wedge *\nu, *d^*\nu) + \|\varphi_i \Delta f\|_{L^2}^2.\label{eq:last}
		\end{align}
		The equality \eqref{eq:third} holds because for $d\varphi_i = a \nu + b (*\nu)$ with $a,b\in \R$,
		$$
		\|d\varphi_i\wedge \nu\|^2_{L^2} = b^2, \quad \|d\varphi_i\wedge *\nu\|^2_{L^2} = a^2, \quad \|d\varphi_i\|^2 = a^2 + b^2.
		$$
		For the middle term in \eqref{eq:last}, we have
		$$
		\begin{aligned}
			(d\varphi_i^2\wedge *\nu, *d^*\nu) &= \left(\tri{d\varphi_i^2,\nu}\vol_M, \Delta f \vol_M\right) \\
			&= (d\varphi_i^2, (\Delta f) \nu) \\
			&= (\varphi_i^2, d^*((\Delta f) \nu)) \quad \text{(because } \varphi_i^2|_{\partial\Sigma} = 0\text{)}.
		\end{aligned}
		$$
		Provided that the term $d^*(\Delta f df) \ge (\Delta f)^2$ everywhere in $\Sigma$, we have 
		$$
		(d\varphi_i^2\wedge *\nu, *d^*\nu)=(\varphi_i^2, d^*((\Delta f) \nu))\ge (\varphi_i^2, (\Delta f)^2) =\|\varphi_i \Delta f\|_{L^2}^2,
		$$
		and consequently in $\eqref{eq:last}$ we derive
		$$\alpha[\varphi_i\nu] \le \lambda_i \|\varphi_i\|^2_{L^2} = \lambda_i.$$
		
		For the term $d^*(\Delta f df)$, we have
		$$
		\begin{aligned}
			d^*(\Delta f df) &= - *d(\Delta f *df) \\
			&= - * (d\Delta f \wedge *df + \Delta f d*df) \\
			&= - \tri{d\Delta f, df} + (\Delta f)^2.
		\end{aligned}
		$$
		Therefore, it is sufficient to require $\tri{d\Delta f, df} = \tri{\nabla \Delta f, \nabla f} \le 0$. By Bochner's formula, this is equivalent to
		\begin{align}\label{eq:Bochner}
			\tri{\nabla \Delta f, \nabla f}=\frac{1}{2}\Delta |\nabla f|^2+|\nabla^2 f|^2 + \Ric(\nabla f, \nabla f) = |\operatorname{Hess} f|^2 + K \le 0,
		\end{align}
		where the Laplacian $\Delta=d^*d$ is positive in our setting. The equation ~\eqref{eq:Bochner} follows from the curvature condition. As for $\alpha[\varphi_i (*\nu)],$ the calculation is similar. This proves the first result.
		
		For the cross term, we have
		$$
			\alpha[\varphi_i\nu,\varphi_i(*\nu)]=
			(d(\varphi_i\nu),d(\varphi_i(*\nu))+(d(\varphi_i(*\nu)),d(\varphi_i (-\nu)))=0.
		$$
	\end{proof}
	
	\begin{rem}
		Let us consider the case where $f$ is chosen as the Busemann function $B_\theta$. $|\operatorname{Hess} B_\theta|$ represents the absolute value of the curvature $h$ of the horocircle. Consequently, \eqref{eq:Bochner} is equivalent to $h^2 + K \le 0$.
		
		Mazzeo in \cite{M1991} introduced a general method for proving Friedlander's inequality by finding a distance function $f$ such that $\Delta f$ remains a constant. This condition corresponds precisely to the equality case in \eqref{eq:Bochner}. Thus our curvature condition is a weaker assumption for proving such eigenvalue inequalities, with stronger results when domains are simply-connected.
	\end{rem}
	The unique continuation results yield the following lemma:
	
	\begin{lemma}\label{lem:maximal}\cite[Proposition~2.5]{BR2012}
		Let $f\in H^1(\Sigma)$ satisfy $\Delta f=\lambda f$ weakly, with $f|_{I}=0$ and $\partial_n f|_{I}=0$ where $I \subset \partial\Sigma$ is a non-empty relatively open set and $n$ is the outer normal vector of $\partial \Sigma$. Then $f\equiv 0$ in $\Sigma$.
		
	\end{lemma}
	
	We now prove the main theorem for a smooth domain.
	
	\begin{thm}\label{thm:pre-main}
		Let $M$ be a Riemannian surface. If a bounded smooth domain $\Sigma \subset M$ satisfies the curvature condition, then the following inequality holds.
		$$
		\mu_{3-\beta_1} \le \lambda_1, \quad \text{where } \beta_1:=\dim H^1_{dR}(\Sigma).
		$$
		If $\beta_1=0,$ we further have the strict inequality:
		$$
		\mu_{3}<\lambda_1.
		$$
	\end{thm}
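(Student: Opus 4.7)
The plan is to apply the min-max variational principle for the Hodge Laplacian $A$ on $H^1\Omega^1_D(\Sigma)$ (the last theorem of Section~\ref{sec:Dirichlet}). By Proposition~\ref{prop:specDec}, $\Spec_+ A = \Spec_+\Delta_D \cup \Spec_+\Delta_N$, and $\ker A = \HH^1_D(\Sigma)$ has dimension $\beta_1$ by Theorem~\ref{thm:isomorphism}. So it suffices to construct a test subspace $V\subset H^1\Omega^1_D(\Sigma)$ of dimension $2k+1$ on which the Rayleigh quotient $\alpha[\omega]/\|\omega\|^2$ is at most $\lambda_k$: the min-max then yields $\eta_{2k+1}(A)\le\lambda_k$, and an eigenvalue count ($\beta_1$ kernel zeros plus at most $k$ Dirichlet eigenvalues $\lambda_1\le\cdots\le\lambda_k$, with the multiplicity case at $\lambda_k$ handled by a standard continuity argument) leaves at least $k+1-\beta_1$ positive Neumann eigenvalues $\mu_j$ ($j\ge 2$) at or below $\lambda_k$, so that together with $\mu_1=0$ one concludes $\mu_{k+2-\beta_1}\le\lambda_k$.

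The test vectors are built from the Dirichlet eigenfunctions $\varphi_1,\ldots,\varphi_k$ and the unit 1-form $\nu=df$ from the curvature condition. For each $i$, I would set $\omega_i^{(1)}:=\varphi_i\nu$ and $\omega_i^{(2)}:=\varphi_i(*\nu)$: Dirichlet vanishing $\varphi_i|_{\partial\Sigma}=0$ places both in $H^1\Omega^1_D(\Sigma)$, they form an $L^2$-orthonormal family because $|\nu|=|{*}\nu|=1$ and $\nu\perp{*}\nu$, and Lemma~\ref{lem:main-lem} gives the diagonal bound $\alpha[\omega_i^{(j)}]\le\lambda_i\le\lambda_k$. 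To this $2k$-dimensional span I then adjoin an $L^2$-orthonormal basis of $\HH^1_D(\Sigma)$, which contributes $\beta_1$ zero-Rayleigh directions since $dh=d^*h=0$ makes $\alpha[h,\cdot]\equiv 0$; in the simply-connected case $\beta_1=0$, I would secure one additional dimension via a Rohleder-type device exploiting the simultaneous availability of $\nu$ and $*\nu$ as orthogonal unit 1-forms, mirroring the two orthogonal coordinate distance functions used in the Euclidean plane.

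The heart of the argument is upgrading the diagonal bound to the bilinear matrix estimate $\alpha[\omega]\le\lambda_k\|\omega\|^2$ on the whole span. Polarizing the proof of Lemma~\ref{lem:main-lem} with integration by parts (using $\varphi_i|_\partial=0$), both diagonal blocks $\alpha[\omega_i^{(j)},\omega_l^{(j)}]$ should take the form $\lambda_l\delta_{il}+B_{il}$ with $B_{il}:=(\varphi_i\varphi_l,\langle\nabla\Delta f,\nabla f\rangle)_{L^2}$ negative semidefinite by the Bochner identity~\eqref{eq:Bochner} together with the curvature condition $K\le-|\operatorname{Hess}f|^2$, while the Stokes-type identity $\int d\varphi_i\wedge d\varphi_l=\int_{\partial\Sigma}\varphi_i\,d\varphi_l=0$ renders the cross block $C_{il}:=\alpha[\omega_i^{(1)},\omega_l^{(2)}]$ skew-symmetric in $(i,l)$. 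Identifying $\R^{2k}\cong\C^k$ via $(u,v)\mapsto u+iv$ realizes the full $2k\times 2k$ block matrix as the Hermitian operator $(\operatorname{diag}(\lambda_l)+B)-iC$ on $\C^k$, and the required bound $\preceq\lambda_k I$ reduces to a spectral estimate on $-iC$ relative to $(D+B)-\lambda_k I$. The principal obstacle is precisely this matrix inequality in the curved setting: Rohleder's planar argument enjoys $\Delta f\equiv 0$ so that $C\equiv 0$ and the bound reduces to the block-diagonal case, but on a general surface $C$ is non-trivial and must be dominated via a curvature-based spectral estimate; a secondary obstacle is the extra dimension for $\beta_1=0$, which in the plane arises from a second coordinate direction but on a surface requires an intrinsic substitute built from the Hodge decomposition and the distance function.
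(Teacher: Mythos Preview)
Your overall strategy matches the paper's: build the $2k$-dimensional test space $\Lambda_k=\operatorname{span}\{\varphi_i\nu,\varphi_i(*\nu)\}_{i\le k}$, apply the min--max principle for $A$, and count. But two points of your plan diverge from the paper and deserve correction.

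\textbf{The extra dimensions.} The paper does \emph{not} adjoin harmonic $1$-forms, nor does it need any ``Rohleder-type device'' to manufacture a $(2k{+}1)$st direction in the simply-connected case. Instead it adjoins the entire eigenspace $\ker(A-\lambda_k)$: for $v$ in that kernel one has $\alpha[u,v]=\lambda_k(u,v)$, so the Rayleigh bound propagates from $\Lambda_k$ to $E_k:=\Lambda_k+\ker(A-\lambda_k)$ for free. The unique-continuation Lemma~\ref{lem:maximal} then forces $\Lambda_k\cap d^*\ker(\Delta_D-\lambda_k)=\{0\}$, giving $\dim E_k\ge 2k+m_D(\lambda_k)$. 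This handles the multiplicity of $\lambda_k$ directly, with no perturbation/continuity argument needed, and it works uniformly in $\beta_1$. Your proposal to add $\HH^1_D$ is inert for the count (those are precisely the $\beta_1$ kernel zeros you subtract later), and the unspecified extra direction for $\beta_1=0$ is unnecessary once you use the paper's trick.

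\textbf{The cross term.} Your identification of the off-diagonal block $C_{il}=\alpha[\varphi_i\nu,\varphi_l(*\nu)]$ as the crux is accurate, and your computation that $C$ is skew while the diagonal blocks equal $D+B$ with $B_{il}=\int\varphi_i\varphi_l\langle\nabla\Delta f,\nabla f\rangle\preceq 0$ is correct. The paper, however, does not carry out this analysis: after recording $L^2$-orthonormality and invoking Lemma~\ref{lem:main-lem} (a purely diagonal statement), it writes ``Thus, for all $u\in\Lambda_k$, $\alpha[u]\le\lambda_k\|u\|^2$'' with no further justification. So the step you flag as the principal obstacle is precisely the step the paper treats as immediate. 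Note in particular that on the hyperbolic plane the curvature condition holds with equality ($\langle\nabla\Delta f,\nabla f\rangle\equiv 0$, hence $B=0$) while $\Delta f\equiv\mathrm{const}\ne 0$, so $C$ is generically nonzero and the Hermitian matrix $(D+B)-iC=D-iC$ need not be $\preceq\lambda_k I$; thus the passage from the diagonal bound to the full quadratic bound is genuinely nontrivial and is not supplied in the paper's write-up either.
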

	
	\begin{proof}
		Let $\eta_1\le \eta_2\le\cdots $ be the eigenvalues of $ A$. The $1$-forms $\{\varphi_k\nu,\varphi_k(*\nu)\}$ generate a $2$-dimensional subspace $\Lambda_k\subset L^2\Omega^1(\Sigma),$ and these two $1$-forms constitute an orthonormal basis of $\Lambda_k.$ Let $u=x\varphi_k\nu+y\varphi_k (*\nu) \in\Lambda_k,$ we have
		\begin{align}
			\alpha[u]&=x^2\alpha[\varphi_k\nu]+y^2\alpha[\varphi_k(*\nu)]+2xy\alpha[\varphi_k\nu,*\varphi_k\nu]\le \lambda_k (x^2+y^2)=\lambda_k \|u\|_{L^2}^2.\label{eq:u}
		\end{align}
		
		Furthermore, for any $v\in H^1\Omega_D^1(\Sigma)$ such that $Av=\lambda_kv,$ we have $\alpha[v,u]=(Av,u)=\lambda_k(v,u),$ and consequently,
		$$
		\alpha[u+v]=\alpha[u]+2\alpha[u,v]+\alpha[v]\le \lambda_k(\|u\|_{L^2}^2+2(u,v)+\|v\|_{L^2}^2)=\lambda_k \|u+v\|_{L^2}^2.
		$$ 
		
		Let $E_k$ denote the subspace $\Lambda_k+\ker (A-\lambda_k) $. We derive
		$$
		\eta(u)\le \lambda_k,\quad \forall u\in E_k.
		$$
		Recall that from Proposition~\ref{prop:specDec} we have
		$$\ker (A-\lambda_k)=d\ker (\Delta_N-\lambda_k)\oplus d^*\ker (\Delta_D-\lambda_k).$$
		Assuming $u\in \Lambda_k\cap d^*\ker(\Delta_D-\lambda_k),$ we have $u=0$ by Lemma~\ref{lem:maximal}. Therefore, we obtain that 
		$$\dim E_k\ge \dim\Lambda_k+ \dim \ker (\Delta_D-\lambda_k)=2+m_D(\lambda_k),$$
		where $m_D(\lambda_k)$ denotes the multiplicity of $\lambda_k$ in $\Spec\Delta_D.$
		
		Let $k=1,$ as the lowest Dirichlet eigenvalue is simple \cite{CouHil53}, we have $\dim E_1\ge 3,$ and then $0\le \eta_1,\eta_2,\eta_3\le \lambda_1.$ As $\dim \ker A=\beta_1$ by Theorem~\ref{thm:isomorphism}, we have $3-\beta_1$ eigenvalues of $A$ in $(0,\lambda_1].$ While $\lambda_1$ is the simple lowest eigenvalue of $\Delta_D,$ $\Delta_N$ has at least $2-\beta_1$ eigenvalues in $(0,\lambda_1]$ by the spectrum decomposition in Theorem~\ref{thm:mainDecomposition}. Taking into account that $\mu_1=0,$ we derive our conclusion that $\mu_{3-\beta_1}\le \lambda_1.$
		
		If $\Sigma$ is simply-connected, we will show that $\Lambda_1\cap \ker (A-\lambda_1)=\emp.$ Then we derive $\eta_1,\eta_2<\lambda_1,$ thus $\mu_3<\lambda_1.$ 
		
		Otherwise, suppose $u\in \Lambda_1\cap \ker (A-\lambda_1)$ is non-zero. We can assume
		$$
		x\varphi_1\nu+y\varphi_1 (*\nu) = u = d\psi+cd^*\Phi_1,\quad c\in \R,
		$$ 
		where $\psi\in H^1(\Sigma)$ is the Neumann eigenfunction and $\Phi_1=\varphi_1\vol_M.$ Then, 
		$$xd\varphi_1\wedge \nu+y d\varphi_1 \wedge (*\nu)+y \varphi_1 d(*\nu) = du =cdd^*\Phi_1=c\lambda_1\Phi_1.$$
		Restricted on $\partial\Sigma,$ we have
		$$
		(-x\tri{d\varphi_1,*\nu}\vol_M + y\tri{d\varphi_1,\nu}\vol_M)|_{\partial_\Sigma} = du|_{\partial\Sigma}=c\lambda_1(\varphi_1\vol) |_{\partial\Sigma}=0,
		$$
		thus at the boundary, we have
		\begin{align}
			\tri{d\varphi_1,y\nu-x(*\nu)}=0. \label{eq:normal}
		\end{align}
		
		Let $\omega:=y\nu-x(*\nu),$ we have
		$$d\omega=-xd(*\nu)=-x\Delta f \vol_M,\quad d(*\omega) = d(y(*\nu)+x\nu)=y\Delta f \vol_M,$$
		and then
		\begin{align}
			\int_{\partial\Sigma} i^*\omega=\int_{\Sigma} d\omega=-x\int_{\Sigma} \Delta f\vol_M,\quad \int_{\partial \Sigma} i^*(*\omega)=\int_{\Sigma} d(*\omega) = y\int_{\Sigma}\Delta f\vol_M. \label{eq:Stokes}
		\end{align}
		
		We now claim that there exists a relatively open set $I\subset \partial\Sigma$ such that $\omega|_{I}$ contains normal components. Then from equation \eqref{eq:normal}, we have $d\varphi_1|_{I}=0,$ since $d\varphi_1$ only has normal components at the boundary as $\varphi_1|_{\partial\Sigma}=0$. By Lemma~\ref{lem:maximal}, it follows that $\varphi_1\equiv 0,$ which leads to a contradiction.
				
		Otherwise, since $\omega$ is continuous, we can suppose that $\omega$ only has tangential components at the boundary. Then 
		\begin{align}
			\left|\int_{\partial\Sigma} i^*\omega\right|=(x^2+y^2)|\partial\Sigma|>0,\quad \int_{\partial \Sigma} i^*(*\omega)=0.\label{eq:positiveTerms}
		\end{align}
			
		By equations \eqref{eq:Stokes} and \eqref{eq:positiveTerms}, we can deduce that \( y = 0 \), implying that \( \omega = -x(*\nu) \) is tangent to \( \partial\Sigma \). This means that \( \nu = df \) is normal to \( \partial\Sigma \). Since \( |\nu| \equiv 1 \), \( \nu \) preserves its sign at the boundary, i.e., it points either inward or outward. Clearly, its index in \( \Sigma \) is zero, as it does not vanish anywhere, and \( \chi(\Sigma) = 1 \), since \( \Sigma \) is simply connected. However, this contradicts the Poincaré-Hopf index Theorem \cite{GP2010} for manifolds with boundary, where the theorem points out that these two quantities should be equal.
	\end{proof}

	\begin{rem}\label{rem:firstTry}
		We initially aimed to prove a more general Friedlander-type inequality, namely 
		\(\mu_{k+2} \le \lambda_k\). However, the cross term 
		\(\alpha[\varphi_i \nu, \varphi_j(*\nu)]\) does not vanish, preventing us 
		from deriving an inequality analogous to \eqref{eq:u}. Fortunately, this 
		cross term vanishes when we restrict our attention to the first Dirichlet 
		eigenvalue \(\varphi_1\).
	\end{rem}
	
	\begin{proof}[Proof of Theorem~\ref{thm:main}]
		We only need to adopt a Lipschitz approximation by smooth domains from the exterior while maintaining the Betti number unchanged. The result follows from Theorem~\ref{thm:pre-main} and the approximation. The inequality of eigenvalues remains valid by semi-continuity of Neumann eigenvalues under this perturbation, see Theorem~\ref{thm:NeumannConverge} in Appendix. However, the strict inequality cannot be preserved.
	\end{proof}
	
	\begin{rem}
		The difficulty in extending our result to higher dimensions stems from the lack of analogous test-form constructions and the challenge of combining the spectra of $\Delta_D$ and $\Delta_N$ as the dimension increases.
		
		However, one may attempt to define and prove the result for Dirichlet and Neumann problems in $n-1$ and $n+1$ dimensions, respectively, when the manifold is $2n$-dimensional.
	
	\end{rem}
	
	\subsection{Applications on standard models} According to Theorem~\ref{thm:main}, the comparison of the Neumann and Dirichlet eigenvalues can be reduced to finding an appropriate distance function that satisfies the inequality $\eqref{eq:curvatureCondition}$ in the curvature condition. We begin with the hyperbolic space.
	\begin{example}\label{eg:spaceForm}
		We provide an explicit expression of the Busemann function $B_\theta(z)$ to prove that it satisfies the curvature condition. Consequently, the eigenvalue inequalities in Theorem~\ref{thm:main} holds for any bounded Lipschitz domain in the hyperbolic space.
		
		To simplify the calculation, we employ the upper-plane model with $K=1$
		$$
		\H^2:=\H^2(-1)=\{z=x+iy\in \C\mid y>0\},\quad ds^2=\frac{dx^2+dy^2}{y^2}.
		$$
		Without loss of generality, we select the infinity point $\infty\in \partial \H^2$ for the Busemann function as
		$$
		B_\infty(z)=-\ln y\in C^\infty(\H^2),\quad |\nabla B_\infty|\equiv 1.
		$$
		By direct calculation, one can derive
		$$
		\operatorname{Hess} B_\infty=\begin{bmatrix}
			\frac{1}{y^2}&0\\0&0
		\end{bmatrix},\quad |\operatorname{Hess} B_\infty|^2=\left(g^{xx}\right)^2\left(\frac{1}{y^2}\right)^2\equiv 1,
		$$
		which ensures that the condition $K+|\operatorname{Hess} B_\infty|^2\le 0$ in any bounded domain $\Sigma\subset \H^2 $.
		
	\end{example}
	
	For the calculation of eigenvalues in hyperbolic space, see \cite{BF2017,B2023,M1991} for related results. In the following example, we provide the explicit eigenvalues of a ring-shaped domain on a flat cylinder to verify our inequality and show the feasibility of general one mentioned in Remark~\ref{rem:firstTry}.
	
	\begin{example}\label{eg:cylinder}
		For $\Sigma=S^1\times[0,\pi]\subset S^1\times \R=M,$ by the separation of variables, one easily derives that
		$$
		\lambda_{i,j}=i^2+j^2,\quad \mu_{i,k}=i^2+k^2,\quad i\in \Z,\,j\in \Z_+,\,k\in \N 
		$$
		constitute all eigenvalues. As $\mu_{i,j}=\lambda_{i,j}$ for all $i\in \Z,$ $j\in \Z_+$ and $\mu_{i,0}\le \lambda_{i,j},$ we can identify $m+1$ Neumann eigenvalues smaller than the $m$-th Dirichlet eigenvalue for all $m\in \Z_+$. Thus, after sorting the eigenvalues, we obtain
		$$
		\mu_{m+1}\le \lambda_{m},\quad \foa m\ge 1.
		$$
	\end{example}
	
	Before proceeding to next example, we first prove Theorem~\ref{thm:logConvex} that provides a method to find suitable twisted product spaces.
	
	\begin{proof}[Proof of Theorem~\ref{thm:logConvex}]
		It is sufficient to select function $f(r,\theta)=r$ as the distance function for the curvature condition. It remains to verify the inequality~\eqref{eq:curvatureCondition}. By the calculation in \cite{P2006}, we require
		$$
		K+|\operatorname{Hess} f|^2=-\frac{\partial_r^2\phi}{\phi}+\left(\frac{\partial_r\phi}{\phi}\right)^2=-\partial_r^2\log \phi\le 0.
		$$
		Therefore, provided that $\phi(r,\theta)$ is log-convex in $r$ for all $\theta\in S^1$, $f(r,\theta)=r$ is a function that satisfies the curvature condition on $M.$	
		
		The remaining part follows from Theorem~\ref{thm:main}.	
	\end{proof}
	
	By Theorem~\ref{thm:logConvex}, we can prove the results for cusps and collars, which are essential fundamental structures in hyperbolic geometry \cite{BMM2018,B1992}.
	
	\begin{example}\label{eg:cusp}
		Let $M$ be a cusp equipped with the standard isothermal metric \cite{W2007}
		$$
		g(z)=\left(\frac{|dz|}{|z|\log|z|}\right)^2,\quad z\in B_{r_0}(0)\setminus\{0\},\quad r_0<1.
		$$
		By setting $z=r e^{i\theta},$ we obtain
		$$
		g(r,\theta)=\left(\frac{1}{r\log r}\right)^2\left(dr^2+r^2d\theta^2\right),\quad r\in (0,r_0).
		$$
		Let $\rho=-\log (-\log r),$ we derive
		$$
		g(\rho,\theta)=d\rho^2+ e^{2\rho} d\theta^2,\quad \rho\in (-\infty,-\log(-\log r_0)).
		$$
		As $\log e^{\rho}=\rho$ is convex, the curvature condition is ensured by $f(r,\theta)=r$ for any bounded Lipschitz domain $\Sigma$ on a cusp. Consequently, the eigenvalue inequalities in Theorem~\ref{thm:main} hold for $\Sigma$.		
	\end{example}
	
	\begin{example}\label{eg:collar}
		Let $M$ be a collar. According to \cite{B1992}, it is isometric to the cylinder $I\times S^1$ with metric
		$$
		ds^2 = d\rho^2 + l_0^2\cosh^2\rho d\theta^2,
		$$
		where $l_0$ represents the length of the simple closed geodesic that corresponds to the collar, and $I\subset \R$ is an interval containing $0$.
		
		By the calculations
		$$
		(\log \cosh \rho)'=\frac{\sinh\rho}{\cosh\rho},\quad (\log \cos\rho)''=\frac{\cosh^2\rho-\sinh^2\rho}{\cosh^2\rho}=\frac{1}{\cosh^2\rho}\ge 0,
		$$
		and hence $l_0\cosh\rho$ is log-convex. Thus, the eigenvalue inequalities in Theorem~\ref{thm:main} hold for every bounded Lipschitz domain in a collar, by Theorem~\ref{thm:logConvex}. 
	\end{example}
		
	The eigenvalue inequality is applicable to bounded Lipschitz domains in funnels because funnels constitute half of the collars.
	
	Finally, we show that the eigenvalue inequalities in Theorem~\ref{thm:main} hold on certain minimal surfaces, including the helicoid and catenoid, which are among the most common and useful minimal surfaces. For instance, these surfaces serve as comparison models in the classification studies of minimal disks \cite{CM2011}.
	
	\begin{example}\label{eg:helicoid}
		Let $M\hookrightarrow \R^3$ be a helicoid parameterized by
		$$
		(x_1,x_2,x_3)=(t\cos s,t\sin s, cs),\quad c>0.
		$$
		We have the following
		$$
		\pfrac{s}=(-t\sin s,t\cos s,c),\quad \pfrac{t}=(\cos s,\sin s,0),
		$$
		$$
		g(t,s)=dt^2+(t^2+c^2)ds^2.
		$$
		$$
		\log\left(\sqrt{t^2+c^2}\right)''=\frac{(t^2+c^2)-2t^2}{(t^2+c^2)^2}\ge 0\quad \Leftrightarrow \quad |t|\le c.
		$$
		By Theorem~\ref{thm:logConvex}, the eigenvalue inequalities in Theorem~\ref{thm:main} hold for every bounded Lipschitz domain $\Sigma\subset M$ contained in $\{|t|<c\}.$ 
	\end{example}

	\begin{example}\label{eg:catenoid}
		Let $M\hookrightarrow \R^3$ be a catenoid parameterized by
		$$
		(x_1,x_2,x_3)=\left(a\cosh \frac{u}{a} \cos v, a\cosh\frac{u}{a}\sin v, u\right),\quad a>0.
		$$
		We have the following
		$$
		\pfrac{u}=\left(\sinh\frac{u}{a}\cos v,\sinh\frac{u}{a}\sin v,1\right),\quad \pfrac{v}=\left(-a\cosh \frac{u}{a}\sin v,a\cosh \frac{u}{a}\cos v,0\right),
		$$
		$$
		g(u,v)=\cosh^2\frac{u}{a}\left(du^2+a^2dv^2\right).
		$$
		Let $r=a\sinh\frac{u}{a},$ $\theta=v,$ we have
		$$
		ds^2=dr^2+(r^2+a^2)d\theta^2.
		$$
		Similarly we derive
		$$
		\log\left(\sqrt{r^2+a^2}\right)''\ge 0\quad \Leftrightarrow \quad |r|\le a.
		$$
		By Theorem~\ref{thm:logConvex}, the eigenvalue inequalities in Theorem~\ref{thm:main} hold for every bounded Lipschitz domain $\Sigma\subset M$ such that $\Sigma\subset \{|r|<a\}.$
	\end{example}
	
	\backmatter
	
	\bmhead{Acknowledgements}
	
	We extend our gratitude to Prof. Zhiqin Lu for discussions on differential forms, Prof. Zuoqin Wang and his students for discussions on regular domain perturbation and strict inequality, and Jin Sun for discussions on cross terms of the Dirichlet form.
	
	\section*{Declarations}
	
	\begin{itemize}
		\item \textbf{Funding:} B. Hua was supported by the National Natural Science Foundation of China (No. 12371056) and the Shanghai Science and Technology Program (Project No. 22JC1400100).
		\item \textbf{Conflict of interest:} The authors declare no conflict of interest.
		\item \textbf{Ethics approval and consent to participate:} Not applicable, as this study does not involve human or animal subjects.
		\item \textbf{Consent for publication:} Not applicable, as this study does not include any individual personal data or images requiring consent.
		\item \textbf{Data availability:} Not applicable, as this work does not involve the use of datasets.
		\item \textbf{Materials availability:} Not applicable, as no materials are used or distributed in this study.
		\item \textbf{Code availability:} Not applicable, as this study does not involve the use of code or software.
		\item \textbf{Author contributions:} The authors contributed to the conception, writing, and revision of the manuscript.
		\item \textbf{Ethical Statement:} This work complies with the ethical standards of scientific research. The manuscript is original, has not been published elsewhere, and has not been submitted simultaneously to any other journal. No part of this work is plagiarized; all borrowed ideas and direct quotations from other works have been appropriately cited with quotation marks where verbatim texts were used. 
	\end{itemize}

\begin{appendices}
	\section{Approximation of Lipschitz domains by smooth domains}
	In the appendix, we prove that a sequence of smooth domains can be chosen to approximate a Lipschitz domain $\Sigma$ from the outside and that the eigenvalues are lower semi-continuous under this convergence. For a comprehensive introduction, the reader may refer to \cite{H2005}.
	
	For the Dirichlet eigenvalue problem, the results were established in \cite{BV1965}. Our focus here is on Neumann eigenvalues. We first select an appropriate sequence of smooth domains.
	
	\begin{lemma}
		For any bounded Lipschitz domain $\Sigma \subset M$, there exists a sequence of smooth domains $\{\Sigma_m\}_{m=1}^\infty$ such that $\Sigma \subset \Sigma_m$ and 
		\[
		m(\Sigma_m \setminus \Sigma) \to 0 \quad \text{as} \quad m \to \infty.
		\]
		Furthermore, $\partial\Sigma_m$ ($\partial\Sigma$ resp.) can be expressed by smooth (Lipschitz resp.) functions locally in the same coordinate chart with uniform Lipschitz constant $C(L)$.
	\end{lemma}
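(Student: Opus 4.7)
The plan is to realize each $\Sigma_m$ as a sub-level set of a smooth global defining function assembled from mollifications of the local Lipschitz graphs describing $\partial\Sigma$, shifted outward just enough to guarantee containment. First I would use the definition of a bounded Lipschitz domain to fix a finite open cover $\{(U_i,\phi_i)\}_{i=1}^N$ of $\partial\Sigma$ in $M$ by coordinate charts $\phi_i : U_i \to \R^2$ such that
\[
\phi_i(\Sigma\cap U_i) \;=\; \{(x,y)\in \phi_i(U_i): y<\psi_i(x)\},
\]
for Lipschitz functions $\psi_i$ with $\Lip(\psi_i)\le L$ independent of $i$, together with shrunken open sets $V_i\subset\subset U_i$ still covering $\partial\Sigma$ and a smooth partition of unity $\{\chi_i\}_{i=1}^N$ subordinate to $\{V_i\}$ in a neighborhood $W$ of $\partial\Sigma$.

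In each chart I would mollify and shift: set $\psi_i^m := \rho_{1/m}*\psi_i + C/m$, where $\rho_{1/m}$ is a standard smooth mollifier of scale $1/m$ and the upward shift $C/m$ with $C>L$ guarantees $\psi_i^m > \psi_i$ pointwise, using the bound $\|\rho_{1/m}*\psi_i-\psi_i\|_{L^\infty}\le L/m$ that follows from Lipschitz continuity. Standard mollification estimates then yield that $\psi_i^m$ is smooth, satisfies $\Lip(\psi_i^m)\le L$, and $\|\psi_i^m-\psi_i\|_{L^\infty}=O(1/m)$. Consequently each local strip $\{(x,y):y<\psi_i^m(x)\}\cap\phi_i(V_i)$ is a smooth-graph enlargement of $\{y<\psi_i(x)\}\cap\phi_i(V_i)$ strictly containing it.

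To pass from local to global I would form, writing $\phi_i=(x_i,y_i)$,
\[
F^m(p) \;:=\; \sum_{i=1}^N \chi_i(p)\bigl(y_i(p) - \psi_i^m(x_i(p))\bigr), \qquad p\in W,
\]
and extend $F^m$ smoothly to $M$ so that $F^m<0$ on any compact subset of $\Sigma$ disjoint from $W$ and $F^m>0$ outside $\cl{\Sigma\cup W}$. By Sard's theorem one may choose a regular value $t_m\in(0,1/m)$ of $F^m$ and define $\Sigma_m:=\{F^m<t_m\}$. The implicit function theorem then shows $\partial\Sigma_m$ is a smooth hypersurface in a neighborhood of $\partial\Sigma$; containment $\Sigma\subset\Sigma_m$ follows from $\psi_i^m>\psi_i$ combined with the extension conditions, and measure convergence $m(\Sigma_m\setminus\Sigma)\to 0$ follows from $\|\psi_i^m-\psi_i\|_{L^\infty}=O(1/m)$ and the finiteness of the cover. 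In each chart $\phi_i$, the local equation $F^m(\phi_i^{-1}(x,y))=t_m$ can then be solved for $y=\widetilde\psi_i^m(x)$ via the implicit function theorem, giving the desired simultaneous graph representation of $\partial\Sigma$ and $\partial\Sigma_m$ over the same $x$-direction.

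The main technical obstacle lies in controlling the gradient of $F^m$ near $\{F^m=t_m\}$ so that the implicit function theorem applies \emph{uniformly} in $m$, yielding the Lipschitz bound $\Lip(\widetilde\psi_i^m)\le C(L)$ required by the lemma. The delicate point is that in an overlap $V_i\cap V_j$ the ``vertical'' directions $\partial_{y_i}$ and $\partial_{y_j}$ differ, so the partition-of-unity sum could in principle allow $\partial_{y_i}F^m$ to vanish. I would handle this by refining the initial cover so that in every nonempty overlap $V_i\cap V_j$ the angle between the two vertical directions is bounded away from $\pi/2$ in a way depending only on $L$; this is achievable in dimension two since each $\psi_i$ is $L$-Lipschitz. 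Under this refinement, $\partial_{y_i}F^m\ge c(L)>0$ in the patching region, which gives $|\nabla\widetilde\psi_i^m|\le |\nabla_{x_i}F^m|/|\partial_{y_i}F^m|\le C(L)$, independent of $m$, as required.
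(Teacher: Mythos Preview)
Your proposal is correct and follows essentially the same route as the paper: mollify and outward-shift the local Lipschitz graphs, glue them via a partition of unity into a global defining function, and recover the smooth boundary together with its uniform Lipschitz constant from the implicit function theorem once transversality in the vertical direction is established. The paper makes that transversality step precise by working in exponential-map charts so that the transition maps between overlapping charts are \emph{almost orthogonal}, which is exactly your ``angle between vertical directions bounded away from $\pi/2$'' condition; your appeal to Sard's theorem is harmless but superfluous once the uniform lower bound $\partial_{y_i}F^m\ge c(L)>0$ is in hand.
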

	
	\begin{proof}
		The results for the Euclidean space are presented in \cite{D1976, AC2024}, where the proof uses small open balls centered at the boundary. This method can be easily applied to manifolds. We briefly explain a few minor distinctions between statements on Euclidean space and manifold, while showing the key steps of the proof, omitting specific details.
		
		In the Euclidean space, a Lipschitz domain $\Sigma \subset \mathbb{R}^n$ is defined such that for each point $x \in \partial \Sigma$, there exists a small open ball $B_x$ centered at $x$ for which $B_x \cap \partial \Sigma$ is the graph of a Lipschitz function with constant $L = L(\Sigma)$.
		
		The situation is similar for a Riemannian manifold; however, we need to apply the exponential map on the tangent space. Specifically, for a Lipschitz domain $\Sigma \subset M$, for each $x \in \partial \Sigma$, there exists a small open neighborhood $B_x$ such that $\exp_x^{-1}(B_x \cap \partial \Sigma)$ is a graph of a Lipschitz function in the tangent space $T_x M$ with a constant $L=L(\Sigma)$. This function is defined on a hyperplane $E_x \subset T_x M$ with values lying in the direction of the orthogonal complement $E_x^\perp$ in $T_x M$. Let $n_x \in E_x^\perp$ be a unit vector pointing outward from $\Sigma$.
		
		Let $\pi : T_x M \to E_x$ be a projection map. We define $B_x' = \pi(\exp_x^{-1}(B_x \cap \partial \Sigma))$. Subsequently, there exists a Lipschitz function $\phi: B_x' \to \mathbb{R}$ such that $\phi(0) = 0$, and the boundary of $\Sigma$ near $x$ is described by
		\[
		B_x \cap \partial \Sigma = \{\exp_x(y + \phi(y) n_x) \mid y \in B_x'\},
		\]
		while the interior of $\Sigma$ near $x$ is given by
		\[
		B_x \cap \Sigma = \{\exp_x(y + u n_x) \mid y \in B_x',\, u < \phi(y),\, y + u n_x \in \exp_x^{-1}(B_x \cap \Sigma)\}.
		\]
		In fact, we can select $B_x$ appropriately such that $\exp_x^{-1}(B_x)$ forms a cylinder, with base $B_x'$ and the axis along $E_x^\perp$.
		
		By further selecting a smaller neighborhood, we can ensure that for any $\eps > 0$, we have
		\[
		\|(d\exp_x)_y - \mathrm{id}_{xy}\| \leq \eps \quad \text{for all } y \in \exp_x^{-1}(B_x),
		\]
		where $\mathrm{id}_{xy} : T_y T_x M \to T_{\exp_y} M$ is an isometry that identifies two Euclidean spaces, and the norm is considered in the sense of linear maps. We require $\id_{x\bullet}:TT_xM\rightarrow TM$ to be smooth.
		
		Now, we consider that $y \in B_{x_1} \cap B_{x_2}$. At the point $\exp_{x_1}^{-1}(y)$, the map $d\exp_{x_2}^{-1} \circ d\exp_{x_1}$ is almost orthogonal, which means
		\[
		\|d\exp_{x_2}^{-1} \circ d\exp_{x_1} - \mathrm{id}_{x_2 \exp_{x_2}^{-1}(y)}^{-1} \circ \mathrm{id}_{x_1 \exp_{x_1}^{-1}(y)}\| \leq 3\eps.
		\]
		Here, $\mathrm{id}_{x_2 \exp_{x_2}^{-1}(y)}^{-1} \circ \mathrm{id}_{x_1 \exp_{x_1}^{-1}(y)} : T_{\exp_{x_1}^{-1}(y)} T_{x_1} M \to T_{\exp_{x_2}^{-1}(y)} T_{x_2} M$ is an orthogonal map, as it is isometric between two Euclidean spaces. We select the identification map $\mathrm{id}_{x_1 x_2} : T_{x_1} M \to T_{x_2} M$ such that $\mathrm{id}_{x_1 x_2}(E_{x_1}) = E_{x_2}$ and $\mathrm{id}_{x_1 x_2}(n_{x_1}) = n_{x_2}$.
		
		\begin{figure}[htbp]
			\centering
			\includegraphics[width=0.8\textwidth]{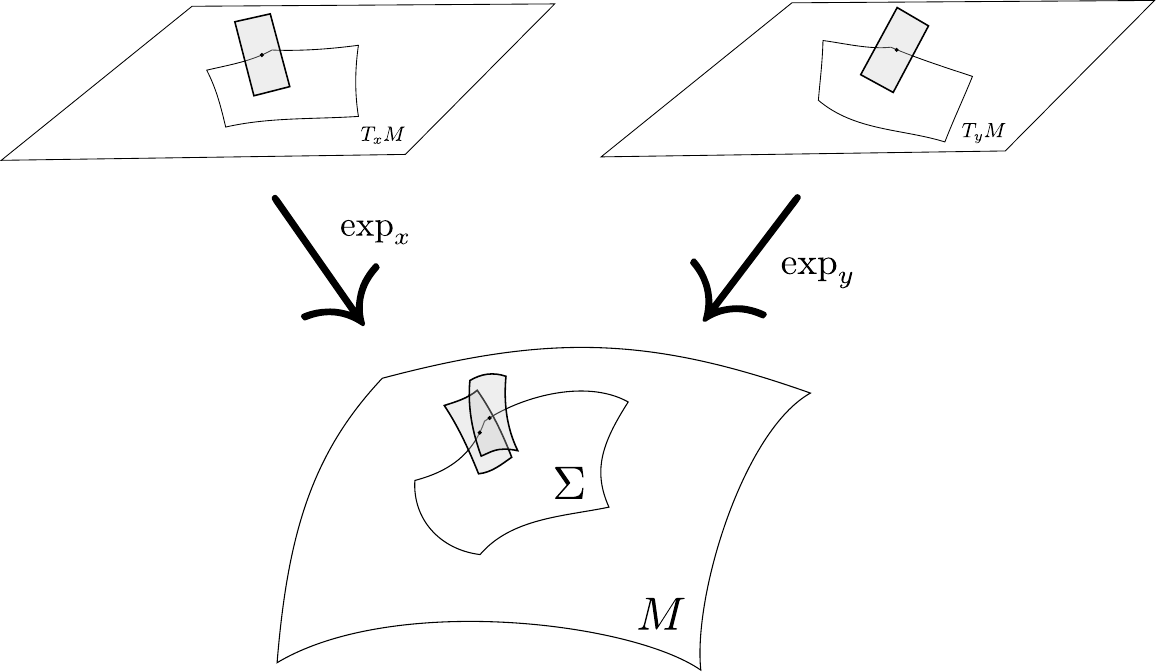}
			\caption{Graphs in two charts differ by an almost orthogonal transformation.}
		\end{figure}
		
		In the Euclidean space, all identification maps are natural, and the exponential map is simply the identity. Consequently, this definition is consistent when the manifold is the Euclidean space.
		
		After clarifying these definitions, we now outline the proof. Initially, we select a finite open cover $\bigcup_{i=0}^N B_i\supset  \Sigma,$ where $\{B_{i}\}_{i=1}^N$ are small neighborhoods at the boundary, satisfying the conditions discussed above. $B_0\subset \Sigma$ is a large open set that covers $\Sigma\setminus \bigcup_{i=1}^NB_ i $. We then consider the unity partition $\{\xi_i\}_{i=0}^N$ belonging to this open cover.
		
		For each neighborhood at the boundary, we have the Lipschitz map $\phi_i$ defined on $B_i'\subset E_i\subset T_{x_i}M.$ We can construct smooth functions $\{\phi_i^{(m)}\}$ by mollifiers such that 
		$$\frac{L}{m}\le \phi_i^{(m)}-\phi_i \le \frac{3L}{m},\quad |d\phi_i^{(m)}|\le L.$$
		Combining these maps with the unity partition, we can obtain the boundary defining function $F,\{F_m\}_{m=1}^\infty$ from $\bigcup_{i=0}^N B_i$ to $\R,$ such that
		$$
		\{F=0\}=\partial\Sigma,\quad \{F<0\}=\Sigma,
		$$
		and the smooth domains
		$$
		\Sigma_m:=\{F_m<0\}
		$$
		satisfy the condition precisely with the boundary $\partial\Sigma_m=\{F_m=0\}.$
		
		Furthermore, $\partial\Sigma_m$ and $\partial\Sigma$ can be expressed using the same local coordinate system in $B_{i}.$ This implies that $\partial \Sigma_m$ can also be expressed by smooth function $f_i^{(m)}$ defined on $B_{i}'.$ It is necessary to prove that $F_m$ in $B_i$ is not degenerate in the direction of $n_i$ and then apply the implicit function theorem. This part in the Euclidean space is proved by the transversality property in \cite{AC2024}, where they primarily utilize the fact that the transition map between Lipschitz graphs in different charts is rigid, that is, a translation plus an orthogonal map. As previously established, we have shown that the transition map between $B_i$ and $B_j$ is almost orthogonal. Consequently, the transversality property can be proven in manifolds in a similar way.
		
		Given that $|d\phi_i^{(m)}|\le L,$ $|d\xi_i|\le C$, we can easily show that $|df_i^{(m)}|\le C(L)$ uniformly in $m$ using the implicit function theorem.
	\end{proof}
	
	The uniform Lipschitz constant is utilized in the following lemma.
	
	\begin{lemma}
		The collection $\{\Sigma_m\}_{m=1}^\infty$ obtained by previous method satisfies the $\eps$-cone condition, where $\eps>0$ is sufficiently small depending on $L.$ Consequently, there exists a linear continuous extension operator $P_m:H^1(\Sigma_m)\rightarrow H^1(\Sigma_1)$ and $\|P_m\|\le K$ for some $K>0$ independent of $m.$
		
	\end{lemma}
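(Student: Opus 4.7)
The plan has two parts: first establish the uniform cone condition from the uniform Lipschitz bound produced in the previous lemma, and then invoke a manifold-adapted extension theorem of Calderón--Stein--Chenais type.

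For the cone condition, I would work in each of the finitely many boundary charts $B_i$ already constructed. There, the boundary $\partial\Sigma_m\cap B_i$ is, after pulling back by $\exp_{x_i}^{-1}$, the graph of a smooth function $f_i^{(m)}$ over $B_i'\subset E_{x_i}$ with $|df_i^{(m)}|\le C(L)$ uniformly in $m$. For any graph of Lipschitz constant $\le C(L)$, the Euclidean cone about the inward direction $-n_{x_i}$ with half-angle $\arctan(1/(2C(L)))$ and height $\eps_0$ lies strictly below the graph, provided $\eps_0$ is smaller than the inradius of $B_i'$. Pushing such a cone forward by $\exp_{x_i}$ yields a geodesic cone inside $\Sigma_m$; the almost-isometry estimate $\|d\exp_{x_i}-\id\|\le \eps$ from the construction guarantees that the shape is preserved up to a constant. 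Since the atlas is finite and all constants depend only on $L$, one obtains the uniform $\eps$-cone condition with $\eps=\eps(L)>0$ independent of $m$.

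For the extension operator, I would apply Chenais' theorem, which asserts that a family of domains satisfying a common cone condition admits linear continuous extension operators to the ambient manifold whose norms depend only on the cone parameters. On the manifold this is obtained by patching: write $u\in H^1(\Sigma_m)$ as $u=\sum_{i=0}^N \xi_i u$ via the partition of unity from the previous lemma, transfer each $\xi_i u$ to Euclidean space through $\exp_{x_i}^{-1}$, apply Stein's universal reflection operator for a Lipschitz hypograph of constant $\le C(L)$ (whose norm depends only on $C(L)$), push the result back to $M$, and sum. After arranging, by passing to a subsequence and enlarging $\Sigma_1$ if necessary, that $\Sigma_m\subset\Sigma_1$ for all $m$, composing with restriction to $\Sigma_1$ yields the desired operator $P_m:H^1(\Sigma_m)\to H^1(\Sigma_1)$ with $\|P_m\|\le K$ uniform in $m$.

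The hard part will be the careful bookkeeping of constants in the curved setting. When estimating $\|P_m u\|_{H^1(\Sigma_1)}$, the gradient picks up contributions from the exponential-map distortion and from $d\xi_i$; I would rely on the uniform bounds $\|d\exp_{x_i}-\id\|\le \eps$ and $|d\xi_i|\le C$ to ensure the operator norm depends only on $L$ and the fixed atlas, not on $m$. Once this is in place, the crucial input---that Stein's reflection has norm controlled purely by the Lipschitz constant $C(L)$---delivers the uniform bound $K$, and the lower semi-continuity of Neumann eigenvalues under the approximation $\Sigma_m\downarrow\Sigma$ will follow by the standard min--max argument using the $P_m$.
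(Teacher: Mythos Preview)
Your proposal is correct and follows essentially the same approach as the paper: derive the uniform $\eps$-cone condition from the uniform Lipschitz bound $C(L)$ on the boundary graphs in each chart, then invoke Chenais' extension theorem (which the paper cites as \cite[Proposition~3.7.2]{HP2018} with proof in \cite{C1975}) to obtain the uniformly bounded $P_m$. The paper's own proof is considerably more terse---it simply asserts the cone condition ``follows directly from their global Lipschitz constant $C(L)$'' and notes that the Euclidean Chenais argument carries over to manifolds via partition of unity---whereas you spell out the half-angle computation, the push-forward of cones by $\exp_{x_i}$, and the Stein-reflection patching; but the underlying strategy is the same.
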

	
	\begin{proof}
		The $\eps$-cone condition is described in many textbooks, and we refer to \cite[Definition 2.4.1]{HP2018} for an example. Define the cone $C(y,\xi,\eps)\subset \R^N$ for $y\in \R^N,$ $\xi\in S^{N-1}$ and $\eps>0:$
		$$
		C(y,\xi,\eps):=\{z\in \R^N,\, \tri{z-y,\xi}\ge \cos(\eps)|z-y|,\,|z-y|\in (0,\eps)\}.
		$$
		The domain $\Sigma$ is said to satisfy the $\eps$-cone condition if for all $x\in \partial\Sigma,$ there exists $\xi_x\in S^{N-1}$ such that
		$$
		C(y,\xi_x,\eps)\subset \Sigma,\quad \foa y\in \Sigma \cap B_\eps(x),
		$$
		where $B_\eps(x)$ denotes an open ball with radius $\eps$ centered at $ x $.
		
		For manifolds, it is necessary to adopt the exponential map to express the condition precisely. The cone can only be defined on the tangent space; therefore, the cone condition is transformed into
		$$
		C(y,\xi_x,\eps)\subset \exp_x^{-1} \Sigma,\quad \foa y\in (\exp_x^{-1}  \Sigma) \cap B_\eps(0).
		$$
		
		\begin{figure}[htbp]
			\centering
			\includegraphics[width=0.4\textwidth]{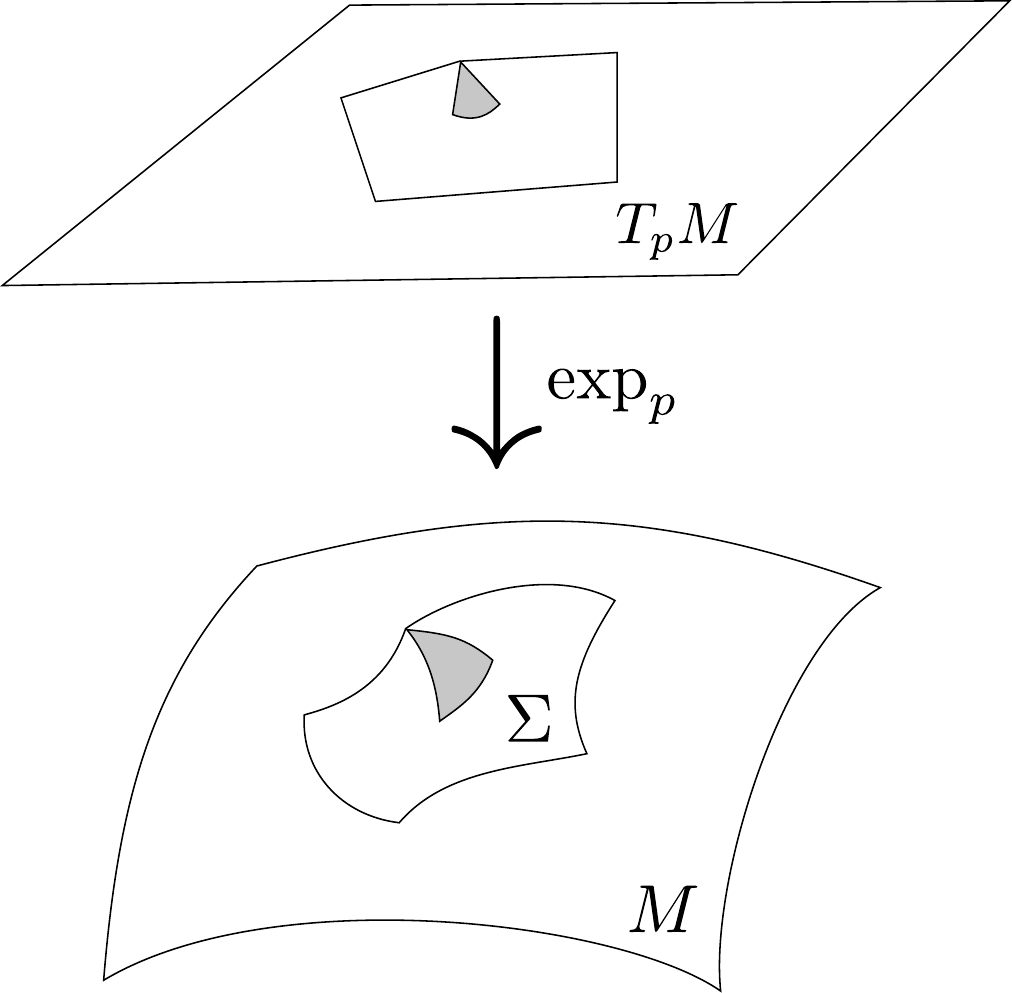}
			\caption{Cone condition on manifolds.}
		\end{figure}
		
		According to \cite[Proposition 3.7.2]{HP2018}, with the proof provided in \cite{C1975}, given a larger bounded domain $B,$ there exists $k>0$ such that for all domains $\Sigma\subset B$ with the $\eps$-cone property, there exists a linear continuous extension operator $P_\Sigma:H^1(\Sigma)\rightarrow H^1(B)$ such that $\|P_\Sigma\|\le K$. Although the statement is presented in Euclidean space, because the proof is proceeded locally and uses the partition of unity, it can be easily extended to manifolds with the previous definitions.
		
		We assert that $\{\Sigma_m\}_{m=1}^\infty$ satisfies the $\eps$-cone condition. This follows directly from their global Lipschitz constant $C(L)$. Consequently, the continuous extension operator is naturally derived.
	\end{proof}
	
	Finally, we prove the lower semi-continuity of the Neumann eigenvalue under this perturbation. 
	
	\begin{thm}\label{thm:NeumannConverge}
		Let $\mu_k^{(m)}$ be the $k$-th Neumann eigenvalue of $\Sigma_m,$ and $\mu_k$ be the $k$-th Neumann eigenvalue of $\Sigma $. We have
		$$
		\mu_k\le \liminf_{m\rightarrow \infty} \mu_k^{(m)}.
		$$
	\end{thm}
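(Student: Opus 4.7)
The plan is to apply the min-max characterization
\[
\mu_k(\Omega)=\min_{\substack{E\subset H^1(\Omega)\\ \dim E=k}}\max_{u\in E\setminus\{0\}}\frac{\int_\Omega|\nabla u|^2}{\int_\Omega u^2}
\]
by constructing a $k$-dimensional test space on $\Sigma$ from the first $k$ Neumann eigenfunctions on the smooth approximants $\Sigma_m$ via the uniform extension operators $P_m$ from the previous lemma. Without loss of generality we may pass to a subsequence realizing the $\liminf$ and assume $\mu_k^{(m)}$ is bounded, since otherwise the inequality is trivial.

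First, I would pick $L^2(\Sigma_m)$-orthonormal Neumann eigenfunctions $\psi_1^{(m)},\dots,\psi_k^{(m)}$ with $\Delta_N\psi_i^{(m)}=\mu_i^{(m)}\psi_i^{(m)}$. Their Rayleigh quotients are bounded by $\mu_k^{(m)}$, giving a uniform bound on $\|\psi_i^{(m)}\|_{H^1(\Sigma_m)}$. Setting $\wt\psi_i^{(m)}:=P_m\psi_i^{(m)}\in H^1(\Sigma_1)$, the uniform bound $\|P_m\|\le K$ yields a uniform bound on $\|\wt\psi_i^{(m)}\|_{H^1(\Sigma_1)}$. By Rellich--Kondrachov applied in the fixed ambient domain $\Sigma_1$, and using a diagonal argument over $i=1,\dots,k$, I can extract a subsequence such that $\wt\psi_i^{(m)}\rightharpoonup\wt\psi_i$ weakly in $H^1(\Sigma_1)$ and strongly in $L^2(\Sigma_1)$. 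Let $\psi_i:=\wt\psi_i|_\Sigma\in H^1(\Sigma)$.

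The main obstacle is ensuring that $\{\psi_i\}_{i=1}^k$ are linearly independent; equivalently, that the $L^2(\Sigma_m)$-orthonormality survives restriction to $\Sigma$ in the limit. Since $P_m$ is an extension, $\wt\psi_i^{(m)}=\psi_i^{(m)}$ on $\Sigma\subset\Sigma_m$, so
\[
\int_\Sigma\psi_i^{(m)}\psi_j^{(m)}=\delta_{ij}-\int_{\Sigma_m\setminus\Sigma}\psi_i^{(m)}\psi_j^{(m)}.
\]
Using the uniform $H^1(\Sigma_1)$ bound on the extensions together with the Sobolev embedding $H^1\hookrightarrow L^p$ valid for any $p<\infty$ on the two-dimensional domain $\Sigma_1$, H\"older's inequality gives
\[
\Bigl|\int_{\Sigma_m\setminus\Sigma}\psi_i^{(m)}\psi_j^{(m)}\Bigr|\le C\,m(\Sigma_m\setminus\Sigma)^{1-2/p}\to 0.
\]
Combined with strong $L^2(\Sigma)$-convergence of the restrictions, this yields $\int_\Sigma\psi_i\psi_j=\delta_{ij}$, so the span $E:=\mathrm{span}(\psi_1,\dots,\psi_k)$ is genuinely $k$-dimensional.

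Finally, for any $u=\sum c_i\psi_i\in E$, set $u^{(m)}:=\sum c_i\psi_i^{(m)}$ on $\Sigma_m$, whose Rayleigh quotient on $\Sigma_m$ is at most $\mu_k^{(m)}$ (by choice of the $\psi_i^{(m)}$ as the first $k$ orthonormal eigenfunctions). Since $\wt u^{(m)}:=\sum c_i\wt\psi_i^{(m)}$ agrees with $u^{(m)}$ on $\Sigma$ and $\wt u^{(m)}\rightharpoonup u$ in $H^1(\Sigma)$, weak lower semi-continuity of the Dirichlet energy gives
\[
\int_\Sigma|\nabla u|^2\le\liminf_{m\to\infty}\int_\Sigma|\nabla\wt u^{(m)}|^2\le\liminf_{m\to\infty}\int_{\Sigma_m}|\nabla u^{(m)}|^2\le\Bigl(\liminf_{m\to\infty}\mu_k^{(m)}\Bigr)\sum_i c_i^2.
\]
Using orthonormality of $\{\psi_i\}$ in $L^2(\Sigma)$, the right-hand side equals $(\liminf\mu_k^{(m)})\int_\Sigma u^2$. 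Feeding $E$ into the min-max formula for $\mu_k(\Sigma)$ concludes the proof.
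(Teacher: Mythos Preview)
Your proposal is correct and follows essentially the same route as the paper: take the first $k$ orthonormal Neumann eigenfunctions on $\Sigma_m$, use the uniform extension operators $P_m$ together with the Sobolev embedding $H^1(\Sigma_1)\hookrightarrow L^p(\Sigma_1)$ and H\"older to show that the $L^2$-mass on $\Sigma_m\setminus\Sigma$ vanishes, conclude that the limiting functions are $L^2(\Sigma)$-orthonormal, and finish with weak lower semicontinuity of the Dirichlet energy and the min-max principle. The only cosmetic difference is that the paper restricts to $\Sigma$ first and invokes $P_m$ only to obtain the $L^p$ bound, whereas you extend to $\Sigma_1$ first and take limits there; the key estimates are identical.
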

	
	\begin{proof}
		Take $L^2$-orthonormal eigen-basis $\{\psi_i\}_{i=1}^\infty$ in $H^1(\Sigma),$ such that  
		$$
		\left\{\begin{aligned}
			&\Delta \psi_i=\mu_i \psi_i,\\
			&\pfrac[\psi_i]{n} |_{\partial \Sigma}=0.
		\end{aligned}\right.
		$$
		
		Let $\{\psi_i^{(m)}\}_{i=1}^\infty$ be the $L^2$-orthonormal eigen-basis in $H^1(\Sigma_m).$ Define
		$$v_i^{(m)}:=\psi_i^{(m)}|_{\Sigma}\in H^1(\Sigma).$$ 
		
		For fixed $k\in \Z_+,$ 
		$$
		\|v_k^{(m)}\|_{H^1(\Sigma)}\le \|\psi_k^{(m)}\|_{H^1(\Sigma_m)} = 1 + \mu_k,
		$$
		thus there exists $u_k\in H^1(\Sigma)$ such that 
		$$
		v_k^{(m)}\stackrel{H^1}{\rightharpoonup} u_k,\quad v_k^{(m)}\xrightarrow{L^2} u_k,\quad m\rightarrow \infty.
		$$
		
		We assert that $\{u_k\}_{k=1}^\infty$ constitutes an $L^2$-orthonormal basis. The following equations hold:
		$$
		(v_i^{(m)},v_j^{(m)})_{\Sigma}\rightarrow (u_i,u_j)_{\Sigma},\quad 
		(\psi_i^{(m)},\psi_j^{(m)})_{\Sigma_m}=\delta_{ij},
		$$
		$$
		\begin{aligned}
			(v_i^{(m)},v_j^{(m)})_{\Sigma}&=(\psi_i^{(m)},\psi_j^{(m)})_{\Sigma_m}-(\psi_i^{(m)},\psi_j^{(m)})_{\Sigma_m\setminus\Sigma}.%\\
			%			&=\delta_{ij}- \int_{\Sigma_m\setminus\Sigma} \psi_i^{(m)}\psi_j^{(m)} d\vol.
		\end{aligned}
		$$
		
		As
		$$
		\left|(\psi_i^{(m)},\psi_j^{(m)})_{\Sigma_m\setminus\Sigma}\right|\le \|\psi_i^{(m)}\|_{L^2(\Sigma_m\setminus\Sigma)}^2|\le \|\psi_j^{(m)}\|_{L^2(\Sigma_m\setminus\Sigma)}^2,
		$$
		it suffices to show that $\|\psi_k^{(m)}\|_{L^2(\Sigma_m\setminus\Sigma)}\rightarrow 0$ for all $k\in \Z_+.$
		
		First of all, we shall show that $\|\psi_k^{(m)}\|_{L^p(\Sigma_m)}$ is bounded in $m$ for $p>2$. Let $P_m$ denote the extension map. By applying the Sobolev inequality, we obtain
		$$
		\begin{aligned}
			\|\psi_k^{(m)}\|_{L^p(\Sigma_m)}&\le \|P_m[\psi_k^{(m)}]\|_{L^p(\Sigma_1)}\\
			&\le C_p   \|P_m[\psi_k^{(m)}]\|_{H^1(\Sigma_1)}\\
			&\le C_pK \|\psi_k^{(m)}\|_{H^1(\Sigma_m)} \\
			&= C_pK(1+\mu_k).
		\end{aligned}
		$$
		
		As a result, we derive
		$$
		\begin{aligned}
			\|\psi_k^{(m)}\|_{L^2(\Sigma_m\setminus\Sigma)} &= \|1_{\Sigma_m\setminus \Sigma}\psi_k^{(m)}\|_{L^2(\Sigma_m)} \\
			&\le m(\Sigma_m\setminus \Sigma)^\frac{1}{q} \|\psi_k^{(m)}\|_{L^p(\Sigma_m)}\rightarrow 0,&\frac{1}{q}+\frac{1}{p}=\frac{1}{2}.
		\end{aligned}
		$$
		
		Consequently, $\{u_k\}_{k=1}^\infty$ constitutes an $L^2$-orthonormal basis. Therefore, by the min-max principle,
		$$
		\begin{aligned}
			\mu_k &= \inf_{\begin{subarray}{c}
					E\subset H^1(\Sigma)\\
					\dim E=k
			\end{subarray}}\sup_{u\in E\setminus\{0\}} \frac{\|\nabla u\|_{L^2(\Sigma)}}{\|u\|_{L^2(\Sigma)}}\\
			&\le \sup_{\sum_{i=1}^k a_i^2=1}\left\|\sum_{i=1}^k a_i \nabla u_i\right\|_{L^2(\Sigma)}\\
			&\le \liminf_{m\rightarrow\infty} \sup_{\sum_{i=1}^k a_i^2=1} \left\|\sum_{i=1}^k a_i \nabla v_i^{(m)}\right\|_{L^2(\Sigma)} & v_i^{(m)}\rightharpoonup u_i\\
			&\le \liminf_{m\rightarrow\infty} \sup_{\sum_{i=1}^k a_i^2=1} \left\|\sum_{i=1}^k a_i \nabla \psi_i^{(m)}\right\|_{L^2(\Sigma_m)}\\
			&=\liminf_{m\rightarrow \infty} \mu_k^{(m)}.
		\end{aligned}		
		$$
	\end{proof}
	
	Now we return to the main theorem of this paper. As $\Sigma$ satisfies the curvature condition, we can select $\Sigma_1$ sufficiently small such that all the elements of $\{\Sigma_m \}_{m=1}^\infty$ satisfy the condition. Through the approximation method, it is evident that $\Sigma_m$ is homeomorphic to $\Sigma,$ and thus, the Betti numbers remain unchanged during this perturbation. 
	
	Given that $\mu_{3-\beta_1}^{(m)}\le \lambda_{1}^{(m)},$ we obtain
	$$
	\mu_{3-\beta_1} \le \liminf_{m\rightarrow \infty} \mu_{3-\beta_1}^{(m)}\le \liminf_{m\rightarrow \infty}\lambda_{1}^{(m)}=\lambda_1,
	$$
	which concludes the proof of the Lipschitz domain.
	
	\begin{rem}
		In fact, it is proved in \cite[Proposition 1.1]{K2014} that for Neumann eigenvalues, we have
		$$
		\limsup \mu_k^{(m)}\le \mu_k,\quad \foa k\ge 0.
		$$ 
		Combined with Theorem~\ref{thm:NeumannConverge}, we can further prove the convergence of Neumann eigenvalues. This was also documented in \cite[Proposition~A.9]{CGGS2023}, where it showed that the bounded extension operator is essential for the convergence.
		
	\end{rem}
	It is noteworthy that in this section, our focus is on perturbations that ``globally'' converge to the domain $\Sigma,$ rather than a thin tube connecting two parts of domains $\Sigma_1$ and $\Sigma_2$ or perturbations near a point singularity. The results for these kinds of perturbations can be found in \cite{AHH1991,H2005,A1995}.
	
\end{appendices}

\bibliography{IND}

\end{document}